\newtheorem{thm}{Theorem}[section]
\newtheorem{prop}[thm]{Proposition}
\newtheorem{lem}[thm]{Lemma}
\newtheorem{defn}[thm]{Definition}
\newcommand{\ra}{\rightarrow}
\newcommand{\dis}{\displaystyle}
\def\veps{\varepsilon}
\def\Ad{\mathrm{Ad}}
\def\ad{\textrm{ad}}
\def\G{\mathscr G}
\def\lg{\mathcal{L}_eG}
\def\lc{\mathcal{L}}
\def\pg{\mathcal{P}_eG}
\def\pg1{\mathcal{P}_e^1 (G)}
\def\pc{\mathcal{P}}
\def\p{\mathscr{P}}
\def\R{\mathbb R}
\def\Z{\mathbb Z}
\def\N{\mathbb N}
\def\C{\mathscr C}
\def\d{\text{\rm{d}}}
\def\la{\langle}
\def\raa{\rangle}
\def\cut{\mathit{Cut}}
\newcommand{\dd}[2]{d_{L^2}(#1,#2)}
\newcommand{\fin}{\hspace*{\fill}\rule{0.3em}{1ex}}
\newenvironment{proof}{{\bf \noindent Proof.}}{\fin}
\numberwithin{equation}{section}
\begin{document}

\title{The existence of geodesics in Wasserstein spaces over path groups and loop groups}%\footnote{Supported partially by  FANEDD (No. 200917)}
\author{Jinghai Shao \footnote{Email: shaojh@bnu.edu.cn}\\[0.6cm]  Center for Applied Mathematics, Tianjin University, Tianjin 300072, China}
%\date{December 26, 2010}
\maketitle
\begin{center}
\begin{minipage}{12cm}
In this work we prove the existence and uniqueness of the optimal transport map for  $L^p$-Wasserstein distance with $p>1$, and particularly present an explicit expression of the optimal transport map for the case $p=2$. As an application, we show the existence of geodesics connecting probability measures satisfying suitable condition on path groups and loop groups.
\end{minipage}
\end{center}

\noindent\textbf{Keywords:} Monge-Kantorovich problem; Path groups; Loop groups; Heat kernel measure;
Pinned Wiener measure

%\noindent\textbf{Math. subject classifications (2000):} 60D05

\section{Introduction}

  In the seminal works of K.T. Sturm \cite{St} and Lott-Villani \cite{LV}, a new concept of curvature-dimension condition has been developed on the abstract metric space to replace the lower bound of Ricci curvature of Riemannian manifold via the convexity of the relative entropy on the Wasserstein space. This convexity is measured by the behavior of the relative entropy along  geodesics connecting two probability measures in the Wasserstein space over this metric space. This concept is equivalent to the Ricci curvature lower bound for Riemannian manifold as shown in \cite{RS} and possesses the advantage of stability under Gromov-Hausdorff convergence. There are many extensions of this concept in various setting, for example, Finsler space \cite{Oh}, Alexandrov spaces \cite{Pe, ZZ}, infinitesimally Hilbertian metric measure spaces \cite{EKS}. The starting point of this concept is that the studied Wasserstein space is a geodesic space, that is, for any two probability measures $\nu_0$ and $\nu_1$ satisfying some additional condition, there exists a geodesic under the $L^2$-Wasserstein metric. The validation of this basic property usually depends on the study of the Monge-Kantorovich problem in respective space.
  %Recently, A. Naber \cite revealed that the analysis on Riemannian path space such as Log-Sobolev inequality on it can provide not only the lower bounded of the Ricci curvature of the base Riemannian manifold, but also can imply the its uppper bound.
  In this work, we shall study the optimal transport on path groups and loop groups and apply the obtained optimal transport maps to show the existence of geodesics in the Wasserstein spaces over path groups and loop groups.
  %This work is partially motivated by the recent observation due to A. Naber in \cite{Na}. There A. Naber revealed that the analysis on Riemannian path space, for example, the Log-Sobolev inequality on it, can provide not only the lower bound of Ricci curvature of the base manifold but also the upper bound. However, the Log-Sobolev inequality on Riemannian manifold provides only the lower bound of Ricci curvature.  \cite{FW-17} and \cite{CT-16} developed this idea to deal with more general lower and upper bounds of Ricci curvature.

  The Monge-Kantorovich problem is to consider how to move the mass
  from one distribution to another as efficiently as possible. Here
  the efficiency is measured against a positive cost function
  $c(x,y)$.
  Precisely, given two probability measures $\mu$ and $\nu$ on a measurable space $X$,
  define its Wasserstein distance by
  \begin{equation}\label{1.1}W_c(\mu,\nu)=\inf\Big\{\int_{X\times X} c(x,y)\,\pi(\d x,\d y);\quad\pi\in\C(\mu,\nu)\Big\},
  \end{equation}
  where $c:X\times X\ra[0,+\infty]$ is called the cost function and $\C(\mu,\nu)$ is the set of
  all probability measures on $X\times X$ with marginals $\mu$ and
  $\nu$ respectively. Then the Monge-Kantorovich problem is to find
  a measurable map $\mathscr T$ satisfying $\nu=(\mathscr T)_\ast \mu$ such that the probability measure $\pi=(id\times \mathscr
  T)_\ast \mu$ attains the infimum in (\ref{1.1}). Here the notion
  $(\mathscr T)_\ast \mu$ denote the push forward of measure $\mu$
  by a measurable map $\mathscr T$, i.e. $(\mathscr
  T)_\ast\mu=\mu\circ \mathscr T^{-1}$; $id$ denotes the identity
  map.
  It is well known that the solving of this problem is very
  crucially dependent on the cost function. On Euclidean space $\R^d$ and Riemannian manifold,
  there are many works to solve this problem with respect to different
  cost functions such as \cite{Bre} \cite{Mc} \cite{GM} \cite{KS}. Refer to \cite{Am} for general
  survey on this respect and to \cite{Vi} for detail discussions.

  When the dimension of the space goes to infinity, Feyel and
  \"Ust\"unel in \cite{FU} proved the existence and uniqueness of the optimal
  transport map on the abstract Wiener space.
  In \cite{FS}, together with Fang, we solved the Monge-Kantorovich problem on loop groups.  There we use the ``Riemannian distance", a kind of Cameron-Martin
  distance in some sense, to define the $L^2$-Wasserstein distance. The advantage of this distance is that there exists a  sequence of
  suitable finite dimensional approximations, which makes it possible to use the results in finite dimensional Lie groups. However,
  the ``Riemannian distance" is too large. It behaves like the Cameron-Martin distance in Wiener space in some sense, which equals
  to infinite almost everywhere with respect to the Wiener measure. This causes great difficulty in ensuring the finiteness of the Wasserstein distance between two probability measures on loop groups. Furthermore, there is no explicit expression of the optimal transport in this case.

  In this work, we shall use another important distance, $L^2$-distance, to define the Wasserstein distance on path or loop groups. Since the $L^2$-distance is always bounded when the Lie group is  compact, the induced Wasserstein distance between any two
  probability measures is finite. Therefore, the finiteness of Wasserstein distance is no longer a constraint of the existence of optimal transport map in this situation. As an application,
  there exists an invertible optimal transport map pushing the heat kernel measure forward to the pinned Wiener measure on loop group. These two probability measures play important role in the stochastic analysis of loop groups.  Another advantage of using $L^2$-distance is that an explicit form of optimal transport map can be given, which helps us to show the existence of geodesic connecting two probability measures on path groups or loop groups.

  The existence of optimal transport map has a lot of applications. For example, it is applied to construct the solution of Monge-Amp\`ere equation (cf. for instance, \cite{EG}), and to establish Pr\'ekopa-Leindler inequalities in \cite{CMS}.  In \cite{JKO}, it helps to construct the
  gradient flow of relative entropy in the space of probability measures, which provides a new method to
  construct the solution of Fokker-Planck
  equations.  This method has been systemically studied and was developed to deal with more general differential equations in \cite{AGS}.
  %In Lott-Sturm-Villani's works \cite{LV,St}, they developed a new concept of curvature-dimension condition on the abstract metric space to replace the lower bound of Ricci curvature of Riemannian manifold via the convexity of the relative entropy on the Wasserstein space. The convexity is measured by the behavior of the relative entropy along  geodesics connecting two probability measures in the Wasserstein space over this metric space.

  When studying the Monge-Kantorovich problem on path and loop groups using the $L^2$-distance, we need to consider the derivative of Riemannian distance on Lie group, which adds some condition on Lie group about the cut locus of its identity element.

  Let $G$ be a connected compact Lie group with Lie algebra $\G$ which is endowed
  with an inner product $\la\,,\,\raa_\G$, and the associated Riemannian distance is denoted by $\rho(\cdot,\cdot)$. Given a
  point $x\in G$, point $y\in G$ is called a \emph{cut point} of $x$ if there exists a geodesic $\gamma:[0,\infty)\ra G$
  parameterized by arc length with $\gamma(0)=x$ such that $\gamma(t_0)=y$ for some $t_0>0$ and for any $t\leq t_0$,
  $\rho(\gamma(0),\gamma(t))=t$ and for any $t>t_0$, $\rho(\gamma(0),\gamma(t))<t$.
  The union of all cut points of $x$ is called the \emph{cut locus} of $x$ and denoted by $\cut(x)$.
  A map $V:[a,b]\ra \G$ is called a \emph{piecewise continuous curve} if there exists a finite subdivision $a=a_0<a_1<\ldots<a_k=b$ such that
  $V\big|_{[a_{i-1},a_i]}$ is continuous for $i=1,\ldots,k$.

  The condition on the cut locus used in this work is:

 (H)\quad  If the cut locus $\cut(e)$ of the identity element $e$ of $G$ is not empty, then for any continuous curve $\{x_t\}_{t\in [a,b]}\subset \cut(e)$, there exists a piecewise continuous curve $\{X_t\}_{t\in[a,b]}$ in $\G$ such that $\mathrm{exp}_e\, X_t=x_t,\ \forall \,t\in [a,b]$, where $\mathrm{exp}_e$ denotes the exponential map determined by the geodesic equations in the setting of Riemannian manifold.

  \noindent\textbf{Examples}: \begin{itemize}
  \item the $n$-dimensional torus $T_n=S^1\times\cdots\times S^1$ is a connected compact Lie group and satisfies the hypothesis (H).

  \item The Heisenberg group $\mathbb{H}^n$ endowed with Carnot-Carath\'eodory distance satisfies the assumption (H) by \cite[Theorem 3.4]{AR}.
  Indeed, the Heisenberg group $\mathbb{H}^n$ is a noncommutative stratified nilpotent Lie group. As a set it can be identified with its
  Lie algebra $\R^{2n+1}\simeq\mathbb C^n\times \R$ via exponential coordinates. Denote a point in $\mathbb H^n$ by $\bm{x}=(\xi,\eta,t)=[\zeta,t]$
  where $\xi=(\xi_1,\ldots,\xi_n)$, $\eta=(\eta_1,\ldots,\eta_n)\in \R^n$, $t\in \R$ and $\zeta=(\zeta_1,\ldots,\zeta_n)\in \mathbb C^n$ with
  $\zeta_j=\xi_j+i\eta_j$. The group law is given by $[\zeta,t]\cdot [\zeta',t']:=[\zeta+\zeta', t+t'+2\sum_{j=1}^n \mathrm{Im} \zeta_j\bar{\zeta}'_j\,]$. The set $L^\ast:=\{[0,s]\in \mathbb H^n;\ s\in \R\backslash \{0\}\}$ is the cut locus of identity element $[0,0]\in \mathbb H^n$. Set $\mathbb{S}=\{a+ib\in \mathbb{C}^n;\,|a+ib|=1\}$. For any $a+ib\in\mathbb{S}$, $v\in\R$ and $r>0$, we say that a curve $\gamma:[0,r]\ra \mathbb{H}^n$ is a curve with parameter $(a+ib,v,r)$ if $\gamma(s)=(\xi(s),\eta(s),t(s))$ where
  \begin{align*}
    \xi_j(s)&=\frac{r}{v}\Big(b_j\Big(1-\cos\frac{vs}{r}\Big)+a_j\sin\frac{vs}{r}\Big),\\
    \eta_j(s)&=\frac{r}{v}\Big(-a_j\Big(1-\cos\frac{vs}{r}\Big)+b_j\sin\frac{vs}{r}\Big),\\
    t(s)&=\frac{2r^2}{v^2}\Big(\frac{vs}{r}-\sin\frac{vs}{r}\Big),\quad \quad j=1,\ldots,n,
  \end{align*} when $v\neq 0$ and
  \[\gamma(s)=(a_1s,\ldots, a_ns,b_1s,\ldots,b_ns,0)\]
  when $v=0$.
  Each curve with parameter $(a+ib, 2\pi, \sqrt{\pi |t|})$ for some $a+ib\in \mathbb{S}$ is a sub-unit minimal geodesic from $[0,0]$ to $\bm x=[0,t]\in L^\ast$, from which one can easily verify $\mathbb H^n$ satisfies the assumption (H).
  \end{itemize}

  In the following, after introducing some necessary notations on path and loop groups,  we present our main results of this paper.

  Denote $\mathcal P(G)$ the path group,
  that is,
  $$\pc(G)=\big\{\ell:[0,1]\ra G \ \text{continuous};\ \ell(0)=e\big\},$$
  where $e$ denotes the unit element of Lie group $G$.
  Let
  $\rho(\cdot,\cdot)$ be the Riemannian metric on $G$, that is,
  $$\rho(x,y)=\inf \Big\{L(\gamma):=\Big(\int_0^1 \big|\gamma(t)^{-1}\frac{\d}{\d t}\gamma(t)\big|_\G^2\,\d
  t\Big)^{1/2}\Big\},
  $$ where the infimum is taken over all absolutely continuous curves connecting $x$ and $y$. It is easy to see
  $\rho(x,y)=\rho(e,x^{-1}y)$ by the definition. The topology of
  $\pc(G)$ is determined by the uniform
  distance $d_\infty(\gamma_1,\gamma_2)$ for $\gamma_1,\,\gamma_2\in \pc(G)$, i.e.
  \begin{equation}\label{1.2}d_\infty(\gamma_1,\gamma_2):=\max_{t\in [0,1]}\rho(\gamma_1(t),\gamma_2(t)).
  \end{equation}
  Under this topology, $\pc(G)$ becomes a complete
  separable space. We now introduce another distance on $\pc(G)$, the $L^2$-distance:
  \begin{equation}\label{1.3}
  d_{L^2}(\gamma_1,\gamma_2)=\Big(\int_0^1\rho(\gamma_1(t),\gamma_2(t))^2\,\d t\Big)^{1/2}.
  \end{equation} It is obvious that $d_{L^2}(\gamma_1,\gamma_2)\leq
  d_\infty(\gamma_1,\gamma_2)$ for any $\gamma_1,\,\gamma_2\in \pc(G)$. In this paper, we consider
  the Wasserstein distance induced by the $L^2$-distance on $\pc(G)$. Given two probability
  measures $\nu$ and $\sigma$ over $\pc(G)$, the $L^p$-Wasserstein
  distance  between them is defined by:
  \begin{equation}\label{1.4}
  W_p(\nu,\sigma)=\inf\Big\{\int_{\pc(G)\times\pc(G)}\dd
  {\gamma_1}{\gamma_2}^p\,\pi(\d \gamma_1,\d \gamma_2);\ \pi\in
  \C(\nu,\sigma)\Big\}^{1/p}, \ \ p>1,
  \end{equation}
  where $\C(\nu,\sigma)$ stands for the set of all probability
  measures on the product space $\pc(G)\times \pc(G)$ with marginals
  $\nu$ and $\sigma$ respectively.

  Set $\mu$ be the Wiener measure on
  $\pc(G)$, which is the diffusion measure corresponding to the left
  invariant Laplace operator $\frac 12\sum_{i=1}^d\tilde{\xi}_i^2$ on
  $G$, where $\{\xi_1,\ldots,\xi_d\}$ denotes an orthonormal basis of
  $\G$ and $\tilde{\xi}$ denotes the associated left invariant
  vector field on $G$.

  Our first main results are the following two theorems on the existence and uniqueness of optimal transport maps on path groups and loop groups.
  \begin{thm}\label{t1.1} Let $G$ be a connected compact Lie group and satisfy assumption (H). Let $\nu$ and $\sigma$ be two probability measures on $\pc(G)$,
  and assume $\nu$ is absolutely continuous with respect to the
  Wiener measure $\mu$ on $\pc(G)$. Then for each $p>1$, there
  exists a unique measurable map $\mathscr T_p:\pc(G)\ra\pc(G)$ such
  that it pushes $\nu$ forward to $\sigma$ and
  \begin{equation*}
  W_p(\nu,\sigma)^p=\int_{\pc(G)}\dd{\gamma}{\mathscr
  T_p(\gamma)}^p\,\d\nu(\gamma).
  \end{equation*}
  Furthermore, there exists some function $\phi$ in the Sobolev space $\mathbf D_1^2(\mu)$ such that the map $\mathscr T_2$ can be expressed as
  \begin{equation}\label{opt-e}\mathscr T_2(\gamma)(t)=\exp_{\gamma(t)}\Big(\frac 12 \ell_{\gamma(t)}\frac{\d^2}{\d t^2}\big(\nabla \phi(\gamma)\big)(t)\Big), \ \ a.e.\, t\in[0,1],
  \end{equation}
  for almost every $\gamma\in \pc(G)$. Here $\exp_\gamma$ denotes the geodesic exponential map on Lie group.
  \end{thm}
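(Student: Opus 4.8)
The plan is to follow the scheme developed for the Monge problem on abstract Wiener spaces by Feyel and \"Ust\"unel, transported to the path-group setting, and then to read off the explicit form (\ref{opt-e}) by combining Kantorovich duality with the first variation of the squared Riemannian distance on $G$. First I would settle existence. Because $G$ is compact the cost $c(\gamma_1,\gamma_2)=\dd{\gamma_1}{\gamma_2}^p$ is bounded and continuous on the Polish space $(\pc(G),d_\infty)$, so $\C(\nu,\sigma)$ is weakly compact and an optimal plan $\pi_0$ exists, with $c$-cyclically monotone support. Kantorovich duality then furnishes a $c$-concave potential $\phi$ with $\phi(\gamma_1)+\phi^c(\gamma_2)=c(\gamma_1,\gamma_2)$ on $\mathrm{supp}\,\pi_0$, and the task reduces to showing that $\pi_0$ is carried by the graph of a map and that this map is unique.

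The heart of the argument is differentiation along Cameron--Martin directions, i.e.\ along the finite-energy paths $X:[0,1]\ra\G$ with $X(0)=0$, acting on $\gamma$ through the left-invariant field $t\mapsto\ell_{\gamma(t)}X(t)$. One checks that $\gamma_1\mapsto c(\gamma_1,\gamma_2)$ is Lipschitz in these directions uniformly in $\gamma_2$, so $\phi$ inherits the same Cameron--Martin Lipschitz bound; the Rademacher theorem on Wiener space then yields $\phi\in\mathbf D_1^2(\mu)$ together with $H$-differentiability $\mu$-a.e., hence $\nu$-a.e.\ since $\nu\ll\mu$. At a point $\gamma_1$ of differentiability, with $\gamma_2$ its partner in $\mathrm{supp}\,\pi_0$, the map $\gamma_1'\mapsto c(\gamma_1',\gamma_2)-\phi(\gamma_1')$ is minimized at $\gamma_1$, so $D_X\phi(\gamma_1)=D_Xc(\gamma_1,\gamma_2)$ for every $X$. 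Using the first variation $\nabla_x\tfrac12\rho(x,y)^2=-\exp_x^{-1}(y)$ pointwise in $t$ and the left-invariance of the metric, the right-hand side takes the form $-\lambda\int_0^1\la W(t),X(t)\raa_\G\,\d t$, where $W(t)\in\G$ is defined by $\exp_{\gamma_1(t)}^{-1}(\gamma_2(t))=\ell_{\gamma_1(t)}W(t)$ and $\lambda=p\big(\int_0^1\rho(\gamma_1(t),\gamma_2(t))^2\,\d t\big)^{p/2-1}>0$. As this holds for all $X$, the function $\lambda W$ is pinned down a.e.\ in $t$; since $\|W\|_{L^2}=\big(\int_0^1\rho(\gamma_1(t),\gamma_2(t))^2\,\d t\big)^{1/2}$ fixes $\lambda$ in turn, the profile $W$ itself is recovered, and applying $\exp_{\gamma_1(t)}$ returns $\gamma_2(t)$. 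This simultaneously shows $\pi_0$ lives on a graph and that $\mathscr T_p(\gamma_1):=\gamma_2$ is unique $\nu$-a.e.

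For $p=2$ the cost is linear in the $t$-integral and the identification becomes explicit. Writing $v=\nabla\phi(\gamma)\in H$ and integrating by parts, $D_X\phi(\gamma)=\int_0^1\la v'(t),X'(t)\raa\,\d t=-\int_0^1\la v''(t),X(t)\raa\,\d t$, where the boundary term at $t=1$ vanishes by the Neumann condition $v'(1)=0$ forced by admitting all $X$ with $X(1)$ free; the same variational identity shows $v''\in L^2$, so that $\tfrac{\d^2}{\d t^2}\nabla\phi$ genuinely exists. Comparing with $D_Xc(\gamma,\gamma_2)=-2\int_0^1\la W(t),X(t)\raa\,\d t$ forces $W=\tfrac12 v''$, that is $\exp_{\gamma(t)}^{-1}\big(\mathscr T_2(\gamma)(t)\big)=\tfrac12\,\ell_{\gamma(t)}\tfrac{\d^2}{\d t^2}\big(\nabla\phi(\gamma)\big)(t)$, which is precisely (\ref{opt-e}).

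I expect the cut locus to be the main obstacle. Both the first variation formula and the inversion of $\exp$ fail exactly when $\gamma_2(t)\in\cut(\gamma_1(t))$, where $\rho(\cdot,\gamma_2(t))^2$ is only Lipschitz and $\exp_{\gamma_1(t)}^{-1}$ is multivalued. Hypothesis (H) is designed to neutralize this: lifting a continuous curve of cut points to a piecewise continuous $\G$-valued curve via $\exp_e$ lets one argue that $\{t:\gamma_2(t)\in\cut(\gamma_1(t))\}$ is Lebesgue-null for $\nu$-a.e.\ $\gamma_1$, so that the pointwise-in-$t$ computations above are legitimate almost everywhere and the logarithm map is single-valued wherever it is invoked.
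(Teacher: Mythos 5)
Your overall skeleton (Kantorovich duality, a $d_{L^2}$-Lipschitz potential, Rademacher's theorem on path space, differentiation of the potential at points paired by the optimal plan, inversion of the exponential map) is the same as the paper's, and your device for $p\neq 2$ --- recovering $\dd{\gamma_1}{\gamma_2}$ from the norm of the quantity that duality determines, hence untangling the factor $\lambda$ --- matches the paper's (\ref{2.24})--(\ref{2.25}). But there is a genuine gap at exactly the point the paper's proof is organized around: the cut locus. You claim that hypothesis (H) ``lets one argue that $\{t:\gamma_2(t)\in\cut(\gamma_1(t))\}$ is Lebesgue-null for $\nu$-a.e.\ $\gamma_1$,'' and you rest the validity of the pointwise first-variation formula $\nabla_x\tfrac12\rho(x,y)^2=-\exp_x^{-1}(y)$ on this. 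That is not what (H) says, and no proof of the null-set claim is offered; none is obvious, since $\gamma_2$ is coupled to $\gamma_1$ (so you cannot fix $\gamma_2$ and apply Fubini against the Wiener measure), and the relative path $\gamma_2(t)^{-1}\gamma_1(t)$ may perfectly well remain inside $\cut(e)$ on a whole interval of times. Hypothesis (H) is a \emph{selection} statement: along a continuous curve lying \emph{inside} $\cut(e)$ one can choose a piecewise continuous lift through $\exp_e$. The paper invokes it precisely in the regime your argument excludes --- Case 2 of Proposition \ref{t2.3}, where $\gamma_2^{-1}(t)\gamma_1(t)$ stays in the cut locus on countably many open intervals --- in order to select a family of minimizing geodesics $v_t$ for which $t\mapsto V_t(1)=\int_0^1 v_t(s)^{-1}\dot v_t(s)\,\d s$ is continuous off an at most countable set. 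Correspondingly, the paper never differentiates the cost: it runs one-sided comparisons with the perturbed curves $\tilde v_t(s)=v_t(s)e^{(1-s)\veps h(t)}$, getting the two inequalities (\ref{2.10})--(\ref{2.11}) as $\veps\ra 0^{\pm}$, which yield the identity (\ref{2.13}) for any choice of minimizing geodesics, cut locus or not; uniqueness of $\gamma_2$ then follows from Lemma \ref{t2.4} (the datum $V_t(1)$ determines the geodesic ending at $e$) together with continuity of $\gamma_2$. As written, your argument collapses on any optimal pair whose relative path spends positive time in the cut locus.

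A secondary omission: you never address measurability of $\gamma_1\mapsto\mathscr T_p(\gamma_1)$, which is needed even to make sense of $(\mathscr T_p)_\ast\nu=\sigma$. The paper devotes a nontrivial construction to this (the basis expansion (\ref{2.17}), measurable dependence of the solution of $d_sv_t(s)=v_t(s)\,dV_t(s)$ on the data, and the piecewise-geodesic interpolants $\gamma^{(N)}$ converging uniformly to $\gamma_2$); ``$\pi_0$ lives on a graph'' does not by itself produce a measurable map.
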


  \begin{thm}\label{t1.2} Let $G$ be a connected compact Lie group and satisfy assumption (H).
  Let $\lg=\{\ell:[0,1]\ra G\
  \text{continuous};\,\ell(0)=\ell(1)=e\}$.  Let $\sigma_1$ and $\sigma_2$ be two probability measures on $\lg$. Assume $\sigma_1$ is
  absolutely continuous with respect to the heat kernel measure $\nu$ on $\lg$.
  Then for each $p>1$ there exists a unique measurable map $\mathscr T_p:\lg\ra\lg$
  such that $(\mathscr T_p)_\ast \sigma_1=\sigma_2$ and
  $$ W_p(\sigma_1,\sigma_2)^p=\int_{\lg} \dd{\ell}{\mathscr T_p(\ell)}^p\,\d\sigma_1(\ell),$$
  where $$ W_p(\sigma_1,\sigma_2)^p:=
  \inf\Big\{\int_{\lg\times\lg}\dd{\ell_1}{\ell_2}^p\,\pi(\d\ell_1,\d\ell_2);\ \pi\in \C(\sigma_1,\sigma_2)\Big\}.$$
  In particular, for each $p>1$, there exists a unique measurable map $\mathscr
  T_p:\lg\ra\lg$ such that $\mathscr T_p$ pushes heat kernel measure
  $\nu$ forward to pinned Wiener measure $\mu_0$ on $\lg$, and its
  inverse $\mathscr T_p^{-1}$ pushes $\mu_0$ forward to $\nu$.

  Moreover, for $p=2$ there exists some $\phi$ in the Sobolev space $ \mathbf{D}_1^2(\nu)$ such that the map $\mathscr T_2$ can be expressed as
  \begin{equation}\label{opt-2}\mathscr T_2(\gamma)(\theta)=\exp_{\gamma(\theta)}\Big(\frac 12 \ell_{\gamma(\theta)}\frac{\d^2}{\d \theta^2}\big(\nabla \phi(\gamma)\big)(\theta)\Big), \ \ a.e.\, \theta\in[0,1],
  \end{equation}
  for almost every $\gamma\in \lg$. Here $\exp_\gamma$ denotes the geodesic exponential map on Lie group.
  \end{thm}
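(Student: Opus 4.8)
\quad The plan is to run the argument of Theorem~\ref{t1.1} with the heat kernel measure $\nu$ on $\lg$ playing the role previously played by the Wiener measure $\mu$ on $\pc(G)$, and then to read off the assertion about the pair $(\nu,\mu_0)$ from the general statement. First I would observe that $\lg$ is a closed subgroup of $\pc(G)$ and that the $L^2$-distance on $\lg$ is exactly the restriction of \eqref{1.3}; hence the metric structure is inherited from $(\pc(G),d_{L^2})$, but the natural reference measure $\nu$ is singular with respect to $\mu$, so Theorem~\ref{t1.1} cannot be quoted directly. Its proof, however, is built from four ingredients, each of which has a loop-group analogue: (i) a sequence of finite-dimensional, piecewise-geodesic approximations of $\lg$ that converge in $d_{L^2}$; (ii) the quasi-invariance of $\nu$ under the Cameron--Martin subgroup of $\lg$, with an integrable logarithmic derivative; (iii) Kantorovich duality for the cost $c=\dd{\cdot}{\cdot}^p$; and (iv) hypothesis (H), which is a statement about $G$ alone and therefore applies verbatim. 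Granting (i)--(iv), one proceeds exactly as before: lower semicontinuity of $c$ together with tightness of $\C(\sigma_1,\sigma_2)$ gives an optimal plan $\pi$ supported on a $c$-cyclically monotone set.

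The passage from the plan $\pi$ to a transport map uses the hypothesis $\sigma_1\ll\nu$: at $\sigma_1$-almost every loop $\ell$ the Kantorovich potential $\phi$ is differentiable along the Cameron--Martin directions, and the first-order optimality relation determines the partner of $\ell$ uniquely. This yields a measurable $\mathscr T_p:\lg\ra\lg$ with $(\mathscr T_p)_\ast\sigma_1=\sigma_2$ and $W_p(\sigma_1,\sigma_2)^p=\int_{\lg}\dd{\ell}{\mathscr T_p(\ell)}^p\,\d\sigma_1(\ell)$; uniqueness follows because any two such maps must agree with the ($\sigma_1$-a.e. unique) minimiser in the first-order relation. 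For $p=2$ one exploits that $\dd{\ell_1}{\ell_2}^2=\int_0^1\rho(\ell_1(\theta),\ell_2(\theta))^2\,\d\theta$ is additive in the parameter $\theta$, so the optimality relation decouples fibrewise; converting the $L^2$-gradient of $\tfrac12\rho^2$ into the Cameron--Martin gradient of $\phi$ by integration by parts introduces the operator $\tfrac{\d^2}{\d\theta^2}$ and produces the explicit formula \eqref{opt-2} with $\phi\in\mathbf D_1^2(\nu)$; the endpoint conditions $\ell(0)=\ell(1)=e$ are preserved because $\nabla\phi(\gamma)$ belongs to the Cameron--Martin space of $\lg$, whose elements vanish at $0$ and $1$.

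For the special pair, recall that the heat kernel measure $\nu$ and the pinned Wiener measure $\mu_0$ on $\lg$ are mutually absolutely continuous. Hence the general statement may be invoked once with $(\sigma_1,\sigma_2)=(\nu,\mu_0)$, giving a map $\mathscr T_p$ with $(\mathscr T_p)_\ast\nu=\mu_0$, and once with $(\sigma_1,\sigma_2)=(\mu_0,\nu)$, giving a map $\mathscr S_p$ with $(\mathscr S_p)_\ast\mu_0=\nu$. Since $\nu$ is absolutely continuous, the optimal plan between $\nu$ and $\mu_0$ is unique, so the plans $(\mathrm{id}\times\mathscr T_p)_\ast\nu$ and $(\mathscr S_p\times\mathrm{id})_\ast\mu_0$ coincide; reading off the coordinates gives $\mathscr S_p\circ\mathscr T_p=\mathrm{id}$ $\nu$-a.e. and, symmetrically, $\mathscr T_p\circ\mathscr S_p=\mathrm{id}$ $\mu_0$-a.e., so $\mathscr S_p=\mathscr T_p^{-1}$ as claimed.

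I expect the principal difficulty to lie in ingredient (ii): the quasi-invariance theory of the heat kernel measure on $\lg$ is substantially more delicate than the Cameron--Martin theorem for the Wiener measure on $\pc(G)$ --- the logarithmic derivative of $\nu$ under a Cameron--Martin shift involves the Ricci curvature of $G$ and is only square-integrable rather than bounded --- and one must verify that this weaker regularity still suffices to differentiate the Kantorovich potential and to justify the fibrewise first-order analysis leading to \eqref{opt-2}. A secondary technical point is to arrange the piecewise-geodesic approximations of $\lg$ so that they respect both the loop constraint and hypothesis (H) simultaneously.
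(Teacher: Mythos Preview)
Your outline is essentially the paper's own: Kantorovich duality produces a pair $(\phi,\psi)$ with $\phi=\psi^c$; $\phi$ is $d_{L^2}$-Lipschitz and hence in $\mathbf D_1^2(\nu)$; the variational argument of Proposition~\ref{t2.3} (now with $h\in H_0(\G)$ rather than $H(\G)$) shows that $\phi(\ell_1)+\psi(\ell_2)=\dd{\ell_1}{\ell_2}^p$ determines $\ell_2$ uniquely once $\phi$ is differentiable at $\ell_1$; measurability and the explicit formula for $p=2$ follow verbatim; and for the special pair $(\nu,\mu_0)$ one uses mutual absolute continuity to run the construction in both directions and conclude $\mathscr S_p\circ\mathscr T_p=\mathrm{id}$.

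Where you diverge is in the weight you assign to ingredients (i) and (ii). The paper does not use finite-dimensional piecewise-geodesic approximations as a structural step; such an interpolation appears only at the very end of the proof of Theorem~\ref{t1.1} to check measurability of the map, and transfers to $\lg$ without issue. More importantly, the difficulty you single out as principal --- establishing quasi-invariance of $\nu$ with square-integrable logarithmic derivative and checking that this suffices for the first-order analysis --- is bypassed entirely. The only analytic input needed is that a bounded $d_L$-Lipschitz function on $\lg$ lies in $\mathbf D_1^2(\nu)$, and this is quoted as a black box (Lemma~\ref{t4.1}, from \cite{Sh}). Since $d_{L^2}\le d_\infty\le d_{\pc}\le d_L$ on $\lg$ (equation~\eqref{4.4}), the $d_{L^2}$-Lipschitz potential $\phi$ is automatically $d_L$-Lipschitz, and Rademacher applies directly; no Ricci-dependent density estimates enter. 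So your plan is correct, but the obstacles you anticipate simply do not arise along the paper's route.
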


The basic idea to prove Theorem \ref{t1.1} and Theorem \ref{t1.2} is similar to that of \cite{FS,Mc}  based on the solution of dual Kantorovich problem. The solution of dual Kantorovich problem gives us a pair of functions $(\phi,\phi^c)$, where
\[\phi^c(y)=\inf_{x\in X}\{c(x,y)-\phi(x)\},\]
for some cost function $c(\cdot,\cdot)$ on some metric space $X$. Then the key point is to show that there is a uniquely determined measurable map $y=\mathscr T(x)$ such that
\[\phi(x)+\phi^c(\mathscr T(x))=c(x,\mathscr T(x))\]
holds for suitable choice of $x$. In the language of $c$-convexity (cf. \cite[Chapter 5]{Vi}), it is equivalent to show that the subdifferential $\partial_c \phi(x)$ contains only one element for suitable choice of $x$.

Due to the explicit expression of the optimal transport map for $L^2$-Wasserstein distance, we applied previous results to show the existence of geodesics in the Wasserstein spaces over path groups and loop groups.

  \begin{thm}\label{t1.3}
    Assume the conditions of Theorem \ref{t1.1} hold. Then for any two probability measures $\nu_0,\,\nu_1$ on $\pc(G)$ with $\nu_0$  being absolutely continuous w.r.t. Wiener measure $\mu$, there exists a curve of probability measures $(\nu_r)_{r\in[0,1]}$ connecting $\nu_0$ and $\nu_1$ satisfying
    \[W_2(\nu_0,\nu_r)=rW_2(\nu_0,\nu_1),\quad r\in[0,1].\]

    Similarly, under the conditions of Theorem \ref{t1.2}, for any two probability measures $\sigma_0,\,\sigma_1$ on $\lg$ with $\sigma_0$ absolutely continuous w.r.t. the heat kernel measure, there exists a geodesic $(\sigma_r)_{r\in[0,1]}$ in $(\mathscr P(\lg), W_2)$ connecting $\sigma_0$ to $\sigma_1$.
  \end{thm}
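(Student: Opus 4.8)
The plan is to build the interpolating family explicitly from the optimal map of Theorem~\ref{t1.1} by sliding each point of a path a fraction $r$ of the way along the Riemannian geodesic prescribed by formula~(\ref{opt-e}). Fix $\nu_0,\nu_1$ as in the statement and let $\mathscr T_2$ and $\phi\in\mathbf D_1^2(\mu)$ be as in Theorem~\ref{t1.1}; write
\[v(\gamma)(t):=\tfrac12\,\ell_{\gamma(t)}\tfrac{\d^2}{\d t^2}\big(\nabla\phi(\gamma)\big)(t)\in T_{\gamma(t)}G\]
for the associated velocity field, so that $\mathscr T_2(\gamma)(t)=\exp_{\gamma(t)}\big(v(\gamma)(t)\big)$ for a.e.\ $t$ and $\nu_0$-a.e.\ $\gamma$. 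For $r\in[0,1]$ define
\[\mathscr T_r(\gamma)(t):=\exp_{\gamma(t)}\big(r\,v(\gamma)(t)\big),\qquad \nu_r:=(\mathscr T_r)_\ast\nu_0.\]
Because $v(\cdot)$ is $\mu$-measurable (here $\phi\in\mathbf D_1^2(\mu)$) and $\exp$ is smooth, each $\mathscr T_r$ is a measurable map; one first checks that $\mathscr T_r(\gamma)$ is again a continuous path, i.e.\ an element of $\pc(G)$, which follows from the continuity of the geodesic-interpolation map off the cut locus together with assumption~(H). Since $\mathscr T_0=\mathrm{id}$ and $\mathscr T_1=\mathscr T_2$, the endpoints are $\nu_0$ and $\nu_1$.

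The first substantive point is the pointwise length identity
\[\rho\big(\mathscr T_r(\gamma)(t),\mathscr T_s(\gamma)(t)\big)=|s-r|\,\big|v(\gamma)(t)\big|_\G=|s-r|\,\rho\big(\gamma(t),\mathscr T_2(\gamma)(t)\big),\qquad 0\le r\le s\le1,\]
valid for $\nu_0$-a.e.\ $\gamma$ and a.e.\ $t$. This holds because the expression~(\ref{opt-e}) is precisely the displacement-interpolation velocity produced in the proof of Theorem~\ref{t1.1}: the curve $s\mapsto\exp_{\gamma(t)}(s\,v(\gamma)(t))$ is a minimizing constant-speed geodesic of $G$ on the whole of $[0,1]$, hence so is its restriction to any subinterval, which gives the displayed equalities. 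Granting it, for $0\le r\le s\le1$ the coupling $(\mathscr T_r\times\mathscr T_s)_\ast\nu_0\in\C(\nu_r,\nu_s)$ together with~(\ref{1.3}) and~(\ref{1.4}) yields
\[W_2(\nu_r,\nu_s)^2\le\int_{\pc(G)}\!\!\int_0^1\rho\big(\mathscr T_r(\gamma)(t),\mathscr T_s(\gamma)(t)\big)^2\,\d t\,\d\nu_0(\gamma)=(s-r)^2\int_{\pc(G)}\dd{\gamma}{\mathscr T_2(\gamma)}^2\,\d\nu_0=(s-r)^2\,W_2(\nu_0,\nu_1)^2 .\]

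To finish, write $W=W_2(\nu_0,\nu_1)$ and use the triangle inequality: for $0\le r\le s\le1$,
\[W\le W_2(\nu_0,\nu_r)+W_2(\nu_r,\nu_s)+W_2(\nu_s,\nu_1)\le rW+(s-r)W+(1-s)W=W,\]
so all three inequalities are equalities. In particular $W_2(\nu_0,\nu_r)=rW_2(\nu_0,\nu_1)$ and, more generally, $W_2(\nu_r,\nu_s)=|s-r|\,W_2(\nu_0,\nu_1)$, i.e.\ $(\nu_r)_{r\in[0,1]}$ is a constant-speed minimizing geodesic in $(\mathscr P(\pc(G)),W_2)$, which is stronger than claimed. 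The loop-group assertion is obtained word for word: replace Theorem~\ref{t1.1} by Theorem~\ref{t1.2}, the Wiener measure $\mu$ by the heat kernel measure $\nu$, the space $\pc(G)$ by $\lg$, use~(\ref{opt-2}) in place of~(\ref{opt-e}), set $\sigma_r:=(\mathscr T_r)_\ast\sigma_0$ with $\mathscr T_r(\gamma)(\theta)=\exp_{\gamma(\theta)}\big(r\cdot\tfrac12\ell_{\gamma(\theta)}\tfrac{\d^2}{\d\theta^2}(\nabla\phi(\gamma))(\theta)\big)$, and repeat the three steps.

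\textbf{Main obstacle.} Everything except the geometry at the cut locus is the standard McCann displacement-interpolation ``sandwich''. The real work is in the two facts that invoke hypothesis~(H): that the tangent vectors $v(\gamma)(t)$ of~(\ref{opt-e}) actually parametrize \emph{minimizing} geodesics of $G$ over all of $[0,1]$ — equivalently, that the mass moved by $\mathscr T_2$ is never pushed past the cut locus — and that the interpolated object $\mathscr T_r(\gamma)$ is genuinely a continuous path; both should already be contained in, or be a short addendum to, the construction of $\mathscr T_2$ carried out for Theorem~\ref{t1.1}.
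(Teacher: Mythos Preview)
Your proposal is correct and follows essentially the same route as the paper: both construct $\nu_r$ by pushing $\nu_0$ forward under the map that slides each point $\gamma(t)$ a fraction $r$ along the minimizing Riemannian geodesic in $G$ to $\mathscr T_2(\gamma)(t)$, establish the pointwise identity $\rho(\gamma(t),\mathscr T_r(\gamma)(t))=r\,\rho(\gamma(t),\mathscr T_2(\gamma)(t))$ via the triangle inequality on $G$, and then use the resulting couplings together with the triangle inequality for $W_2$ to saturate the upper bounds. The only cosmetic difference is that the paper works directly with the geodesics $v_t$ built in Proposition~\ref{t2.3} (setting $u_t(s)=\gamma_2(t)v_t(s)$ and $\Phi_\lambda(\gamma_1)=u_\cdot(\lambda)$) rather than through the exponential-map expression~\eqref{opt-e}, and it records only the two-piece sandwich $W_2(\nu_0,\nu_\lambda)+W_2(\nu_\lambda,\nu_1)\le W_2(\nu_0,\nu_1)$ rather than your three-piece version for arbitrary $r<s$; your ``main obstacle'' (minimality of the geodesics on all of $[0,1]$ and continuity of $t\mapsto\mathscr T_r(\gamma)(t)$) is exactly what the paper tacitly inherits from the construction in Proposition~\ref{t2.3}.
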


   This paper is organized as follows: in the next section, we introduce some notations and basic results on Lie group. In section 3, we
   give the proofs of Theorem \ref{t1.1} in the case $p=2$ and Theorem \ref{t1.3} in order
   to explain the idea of the argument. For the general case
   $p>1$, the proof of Theorem \ref{t1.1} is stated in section 4. In the last section, we
   investigate the Monge-Kantorovich problem on loop groups. Some basic
   notations on loop group and the argument of Theorem \ref{t1.2}
   are stated there.

\section{Preliminaries}

   We first review some basic notions and results on the Lie group and
   its Lie algebra. The proofs of these results will be omitted, and refer to Warner's book \cite{War} for details.

   A Lie group $G$ is a differentiable manifold which is also endowed
   with a group structure such that the map $G\times G\ra G$ defined by
   $(\sigma,\tau)\mapsto \sigma \tau^{-1}$ is smooth.
   Let $\sigma\in G$, left translation by $\sigma$ and right
   translation by $\sigma$ are respectively the diffeomorphisms
   $\ell_\sigma$ and $r_\sigma$ of $G$ defined by $$
   \ell_\sigma(\tau)=\sigma \tau,\quad r_\sigma(\tau)=\tau\sigma \quad
   \text{for all $\tau\in G$}.$$
   A vector field $X$ on $G$ is called left invariant if for each
   $\sigma\in G$, $$d\ell_\sigma\circ X=X\circ \ell_\sigma.$$

   A Lie algebra of the Lie group $G$ is defined to be the Lie algebra
   $\G$ of left invariant vector fields on $G$. The map $\alpha:\G\ra T_eG$ defined by $\alpha(X)=X(e)$ is an
   isomorphism from the Lie algebra $\G$ to the tangent space of $G$ at
   the identity.  $\alpha $ is injective and surjective. It will be convenient at times
   to look on the Lie algebra as the tangent space of $G$ at the identity. We consider the left invariant vector fields on $G$, then the tangent space $T_gG$ at every point $g\in G$ can be viewed as $g\G$, and the inner product $\la\,,\, \raa_\G$ induces a inner product on $T_gG$ by
   \[\la d\ell_g\circ X,d\ell_g\circ Y\raa=\la X, Y\raa_\G,\quad X,\,Y\in \G.\]

\noindent\textbf{Examples of Lie group and its Lie algebra:}
\begin{itemize}
\item[a)]The set $ gl(n,\R)$ of all
$n\times n$ real matrices is a real vector space. Matrices are added
and multiplied by scalars componentwise. $gl(n,\R)$ becomes a Lie
algebra if we set $[A,B]=AB-BA$.

\textit{The general linear group} $Gl(n,\R)$ is the set of all
$n\times n$ non-singular real matrices. Then $Gl(n,\R)$ becomes a
Lie group under  matrix multiplication, and $gl(n,\R)$ can be
considered as the Lie algebra of $Gl(n,\R)$.

\item[b)] \textit{Special linear group} $Sl(n,\R)=\{A\in
Gl(n,\R):\mathrm{det}\, A=1\}$ is a Lie group. Its Lie algebra will
be matrices of trace $0$, $sl(n,\R)=\{A\in gl(n,\R):
\mathrm{trace}\, A=0\}$.
\end{itemize}

\begin{defn}\label{b5}
Let $G$ and $H$ be Lie groups. A map $\phi:G\ra H$ is a (Lie group)
homomorphism if $\phi$ is both $C^\infty$ and a group homomorphism
of the abstract groups.

Let $\G$ and $\mathscr{H}$ be Lie algebra, a map $\psi:\G\ra
\mathscr{H}$ is a (Lie algebra) homomorphism if it is linear and
preserves Lie brackets, i.e. $\psi([X,Y])=[\psi(X),\psi(Y)]$ for all
$X,Y\in \G$.
\end{defn}

A homomorphism $\phi:\R\ra G$ is called a 1-parameter subgroup of
$G$.  For each $X\in \G$, there exists a unique 1-parameter
subgroup $t\mapsto \sigma_X(t)$ such that its tangent vector at $0$ is $X(e)$.
This induces a definition of exponential map on Lie group by $\exp X=\sigma_X(1)$.
This definition of exponential map does not depend on the metric on
$\G$. In matrix Lie groups, the exponential map $\exp A$
coincides with the usual exponential of matrices
$$\exp A=\sum_{k=0}^\infty \frac {A^k}{k!}.$$

\begin{defn}\label{b10}
Let $\sigma\in G$. We define the action $\Ad_\sigma$ on $\G$ by
\begin{equation}\label{ad1}
\Ad_\sigma X=\Big \{\frac{\d}{\d t}
\sigma\exp(tX)\sigma^{-1}\Big\}_{t=0} \quad\text{for each X in
$\G$}.
\end{equation}
$\Ad:G\ra \mathrm{Aut}(\G)$ is called the adjoint representation of
Lie group $G$, where $\mathrm{Aut}(\G)$ denotes the set of
automorphisms on $\G$.
\end{defn}
By the definition, it is easy to obtain that for all $\sigma,\,\tau\in G$  $\Ad_\sigma X=d\ell_\sigma d r_{\sigma^{-1}} X$.
Let $X,\,Y\in \G$, define
\begin{equation}\label{ad2}
\ad_X Y=\Big\{\frac{\d}{\d t} \Ad_{\exp tX} Y\Big\}_{t=0}.
\end{equation}
Then $\dis\ad_X Y=[X,Y].$

For each $A\in \G$, let $\tilde{A}$ denote the unique left invariant
vector field on $G$ determined by $A$. Given a metric $\la\ ,\, \raa$
on Lie algebra $\G$.  It can induce a left invariant Riemannian
metric on $G$.  The Levi-Civita connection on $G$ induced by this
metric is given by
\begin{equation}\label{b13}
\la \nabla_{\tilde{A}} \tilde{B},\tilde{C}\raa= \frac 12\big\{\la
[A,B],C\raa-\la [A, C],B\raa-\la [B,C],A\raa\big\},\quad \text{for
$A,\ B,\ C\in \G$}.
\end{equation}
Let $\ad_A^\ast$ be the adjoint operator of $\ad_A$ w.r.t $\la\ ,\,
\raa$. Then
\begin{equation}\label{b13.5}
\nabla_{\tilde{A}} \tilde{B}=\widetilde{\nabla_A B},\quad \nabla_A
B=\frac 12\big(\ad_A B-\ad_A^\ast B-\ad_B^\ast A\big).
\end{equation}
Given an orthonormal basis $\{e_i\}_{i=1}^d$ of $\G$, since $\la
\tilde{e}_i,\tilde{e}_j\raa_\sigma=\la e_i,e_j\raa$,
$\{\tilde{e}_i\}_{i=1}^d$ is a family of orthonormal vector fields
on $G$. Let
\begin{equation}\label{b14}
\Gamma_{ij}^k=\la \nabla_{e_i} e_j,e_k\raa=\la
\nabla_{\tilde{e}_i}\tilde{e}_j,\tilde{e}_k\raa.
\end{equation}
Then with respect to the Levi-Civita connection a $C^1$ curve
$(\gamma_t,\ a<t<b)$ on $G$ is called a \textit{geodesic} if
$\dot{\gamma}(t):=\d \gamma(t)/\d t$ is parallel along $\gamma$,
i.e.
\begin{equation}\label{b15}
\nabla_{\dot{\gamma}(t)} \dot{\gamma}(t)=0.
\end{equation}
Setting $\dot{\gamma}_k(t)=\la \dot{\gamma}(t),\tilde{e}_k\raa$,
then the equation (\ref{b15}) turns into
\begin{equation}\label{b16}
\frac {\d\dot{\gamma}_k(t)}{\d t}+\sum_{i,j=1}^d \Gamma_{ij}^k
\dot{\gamma}_i(t)\dot{\gamma}_j(t)=0,\quad \text{for
$k=1,\ldots,d$}.
\end{equation}
Note that in this equation, the Christoffel coefficients
$\Gamma_{ij}^k$ are independent of the curve $\gamma(t)$, which is
different to general geodesic equations on manifolds. This geodesic equation induces another definition of exponential
map on Lie group when being viewed as a Riemannian manifold, and this exponential map depends on the inner product defined on Lie
algebra $\G$. But when $\G$ is endowed with an $\Ad$-invariant metric $\la\ ,\ \raa$, namely, $\la \Ad_g X,\Ad_g Y\raa=\la X, Y\raa$, for all
$g\in G$ and $X,\,Y\in \G=T_eG$, then 1-parameter subgroups are geodesics (see \cite[Corollary 3.19]{CE}) and every geodesic is
coincident with a translation of a segment of 1-parameter subgroup (see J. F. Price \cite[Theorem 4.3.3]{Pri}). This enable us to know that
for compact connected Lie groups the exponential maps induced by 1-parameter subgroup are surjective.
It is known (cf. \cite[proposition 5.4]{CE}) that the cut locus of each point $g$ on $G$ is closed, and contains two kinds of points, i.e. if $g'$ is in the cut locus of $g$, then
$g'$ is either the first conjugate point of $g$ along some geodesic connecting $g$ with $g'$, or there exists at least two minimizing geodesics
joining $g$ to $g'$. When $G$ is a simply connected Lie group with $\Ad$-invariant metric, then all geodesics minimize up to the first conjugate point (cf. \cite[Corollary 5.12]{CE}).

There are lots of work about infinite dimensional stochastic analysis on path groups and loop groups. We refer
to \cite{Dri,DL} and the book \cite{Fang} for some basic facts and results.

  \section{Proof of Theorem \ref{t1.1}: the case $p=2$}
  Let us recall a well known result on the solving of Kantorovich dual
  problem. Refer to \cite[Theorem 5.10]{Vi} for the argument.
  \begin{thm}\label{t2.1}
  Let $X$ and $Y$ be two Polish spaces and $\mu$, $\nu$ be two probability measures on $X$ and $Y$ respectively. Let
  $c:X\times Y\ra \R$ be a lower semicontinuous cost function such that
  $$\forall \, (x,y)\in X\times Y,\quad c(x,y)\geq a(x)+b(y)$$
  for some real-valued upper semicontinuous functions  $a\in L^1(\mu)$ and $b\in L^1(\nu)$.
  Then if $$C(\mu,\nu):=\inf_{\pi\in \C(\mu,\nu)} \int c\,\d\pi$$ is finite, and one has the pointwise upper bound
  \begin{equation}\label{2.1}c(x,y)\leq c_X(x)+c_Y(y),\quad (c_X,c_Y)\in L^1(\mu)\times L^1(\nu),\end{equation}
  then both the primal and dual Kantorovich problems have solutions, so
  \begin{equation}\label{2.2}\begin{split}&\min_{\pi\in\C(\mu,\nu)}\int_{X\times Y} c(x,y)\,\d\pi(x,y)\\
  &=\max_{(\phi,\psi)\in L^1(\mu)\times L^1(\nu):\,\phi+\psi\leq c}\Big(\int_X\phi(x)\,\d\mu(x)+\int_Y\psi(y)\,\d\nu(y)\Big)\\
  &=\max_{\phi\in L^1(\mu)}\Big(\int_X\phi(x)\,\d\mu(x)+\int_Y\phi^c(y)\,\d\nu(y)\Big),
  \end{split}
  \end{equation} where
  \begin{equation}\label{2.3}
  \phi^c(y):=\inf_{x\in X}\big\{ c(x,y)-\phi(x)\big\}.
  \end{equation}
  \end{thm}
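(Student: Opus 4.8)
The plan is to prove the two equalities in \eqref{2.2} by splitting the argument into four moves: weak duality, absence of a duality gap, attainment in the primal problem, and attainment in the dual problem. Weak duality is immediate: for any $\pi\in\C(\mu,\nu)$ and any pair $(\phi,\psi)\in L^1(\mu)\times L^1(\nu)$ with $\phi(x)+\psi(y)\le c(x,y)$, integrating this inequality against $\pi$ and using that the marginals of $\pi$ are $\mu$ and $\nu$ gives $\int_X\phi\,\d\mu+\int_Y\psi\,\d\nu\le\int c\,\d\pi$, the integrals being well defined because of the lower bound $c\ge a+b$. Taking the supremum over admissible pairs and the infimum over $\pi$ yields that the dual value does not exceed $C(\mu,\nu)$; and the reduction to a single potential in the last line of \eqref{2.2} costs nothing, since for fixed $\phi$ the function $\phi^c$ of \eqref{2.3} is the largest admissible $\psi$, while replacing $\phi$ by its double transform $\phi^{cc}\ge\phi$ never lowers the objective.

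The analytic core is the absence of a gap. I would first reduce to a bounded continuous cost: truncating $c$ to $c\wedge n$ preserves lower semicontinuity, and every bounded lower semicontinuous function is an increasing limit of bounded continuous ones (e.g.\ via $f_k(x)=\inf_y(f(y)+k\,d(x,y))$), so two applications of monotone convergence transfer an equality proved for bounded continuous costs to the general case, the upper bound \eqref{2.1} guaranteeing that the limiting value stays finite. For a bounded continuous cost one natural route is the Fenchel--Rockafellar duality theorem on the Banach space $C_b(X)\times C_b(Y)$: take $\Theta(u,v)=0$ if $u(x)+v(y)\le c(x,y)$ for all $(x,y)$ and $+\infty$ otherwise, and $\Xi(u,v)=-\int u\,\d\mu-\int v\,\d\nu$; the Legendre transform of $\Xi$ pins the marginals of a measure to be $\mu$ and $\nu$, that of $\Theta$ encodes the coupling constraint, and unwinding the transforms produces exactly $C(\mu,\nu)=\sup_\phi\big(\int\phi\,\d\mu+\int\phi^c\,\d\nu\big)$. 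The continuity-at-an-interior-point requirement of Fenchel--Rockafellar is met because, after subtracting the constant $\|c\|_\infty$, the pair $(0,0)$ lies in the interior of $\{\Theta<\infty\}$.

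Primal attainment follows from compactness. The set $\C(\mu,\nu)$ is uniformly tight: if compact sets carry all but $\veps$ of $\mu$ and of $\nu$, then their product carries all but $2\veps$ of every coupling, so $\C(\mu,\nu)$ is weakly sequentially compact by Prokhorov (here $X\times Y$ is Polish), and it is clearly weakly closed. Writing $\int c\,\d\pi=\int(c-a-b)\,\d\pi+\int a\,\d\mu+\int b\,\d\nu$, the first term has a nonnegative lower semicontinuous integrand, hence $\pi\mapsto\int(c-a-b)\,\d\pi$ is lower semicontinuous for weak convergence (approximate $c-a-b$ from below by bounded continuous functions and use monotone convergence), while the remaining two terms are constant on $\C(\mu,\nu)$. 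A lower semicontinuous functional attains its infimum on a compact set, which produces an optimal $\pi$.

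Dual attainment is, I expect, the main obstacle. Starting from a maximizing sequence $(\phi_n)$ and replacing each $\phi_n$ by its double $c$-transform, one may assume the $\phi_n$ are $c$-concave, so that $\phi_n(x)=\inf_y\big(c(x,y)-\phi_n^c(y)\big)$ and symmetrically for $\phi_n^c$. The crux is to extract a limiting potential, and this is precisely where hypothesis \eqref{2.1} is indispensable: together with a normalization of the free additive constant (for instance pinning $\phi_n$ at a base point, or fixing $\int\phi_n\,\d\mu$) it traps $\phi_n$ and $\phi_n^c$ between fixed $L^1$ envelopes and endows them with a common ``$c$-modulus of continuity'', which prevents a maximizing sequence from escaping to infinity. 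One then extracts a subsequence converging to a pair $(\phi,\psi)$ --- pointwise $\mu$-a.e., resp.\ $\nu$-a.e., by a diagonal argument, or in a suitable weak sense --- passes to the limit in the constraint $\phi+\psi\le c$ by Fatou, and invokes the no-gap identity to conclude that $(\phi,\phi^c)$ realizes the maximum. Making this compactness argument work for a cost that is only lower semicontinuous and a priori unbounded, and securing the measurability of the limiting potentials, is where the real work of the proof is concentrated.
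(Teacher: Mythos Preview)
The paper does not prove this theorem at all: it is quoted as a known result, with the sentence ``Refer to \cite[Theorem 5.10]{Vi} for the argument'' immediately preceding the statement. So there is no ``paper's own proof'' to compare your proposal against.

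Your outline is the standard one --- essentially the skeleton of Villani's proof in the cited reference --- and the four moves (weak duality, no gap via reduction to bounded continuous costs and Fenchel--Rockafellar, primal attainment by Prokhorov compactness and lower semicontinuity, dual attainment via $c$-concave regularization and extraction) are correctly identified. Your own caveat is apt: the dual-attainment step is where the genuine technical work lies, and what you have written there is a plan rather than a proof. In particular, the passage from ``trapped between $L^1$ envelopes with a common $c$-modulus'' to an actual convergent subsequence with a limit satisfying $\phi+\psi\le c$ requires care (measurable selection of approximate minimizers, handling the points where $c$ is $+\infty$-valued after untruncation, and checking that the limit pair is admissible and realizes the supremum). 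If you intend to include a self-contained proof rather than a citation, that is the part to flesh out; otherwise, citing \cite[Theorem~5.10]{Vi} as the paper does is entirely appropriate here.
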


  In our situation, the diameter $D$ of Lie group $G$ is
  finite as $G$ is assumed to be compact. Then $\dd xy\leq D<+\infty$, and hence the
  condition (\ref{2.1}) is satisfied for any probability measures
  $\nu$ and $\sigma$ on $\pc(G)$. $\dd xy$ is also continuous on
  $\pc(G)$. According to Theorem \ref{t2.1}, it holds that for any two probability
  measures $\nu$ and $\sigma$ on $\pc(G)$,
  $$W_2(\nu,\sigma)^2=\sup\Big\{\int_{\pc(G)} \phi(x)\,\nu(\d
  x)+\int_{\pc(G)} \psi(y)\,\sigma(\d y)\Big\},$$
  where the supremum runs among all pairs of measurable functions
  $(\phi,\psi)$ such that $\phi(x)+\psi(y)\leq \dd xy^2$.  Moreover, there exists a pair of functions
  $(\psi^c,\psi)$ such that
  \begin{equation}\label{2.4}
  W_2(\nu,\sigma)^2=\int_{\pc(G)}\psi^c(x)\,\nu(\d
  x)+\int_{\pc(G)}\psi(y)\,\sigma(\d y),
  \end{equation} where $\psi^c(x)=\inf_{x\in\pc(G)}\big\{\dd
  xy^2-\psi(y)\big\}$. In the rest of this section, we will fix such pair of functions $(\psi^c,\psi)$ and denote by
  $\phi(x)=\psi^c(x)$.
  Then $\phi$ is Lipschitz continuous with respect to the distance
  $\dd xy$. In fact, for any fixed $x,\,z\in
  \pc(G)$, for any $\veps>0$, there exists  $y_\veps\in \pc(G)$ such
  that $\psi^c(z)\geq \dd z{y_\veps}^2-\psi(y_\veps)-\veps$. Then
  \begin{align*}\phi(x)-\phi(z)&=\psi^c(x)-\psi^c(z)\\
     &\leq \dd x{y_\veps}^2-\psi(y_\veps)-\dd
     z{y_\veps}^2+\psi(y_\veps)+\veps\\
     &\leq \big(\dd x{y_\veps}+\dd z{y_\veps}\big)\big(\dd x{y_\veps}-\dd
     z{y_\veps}\big)+\veps\\
     &\leq 2D\dd xz+\veps.
  \end{align*}
  Letting $\veps\ra 0^+$, we get $\phi(x)-\phi(z)\leq 2D\dd xz$.
  Changing the place of $x$ and $z$, we get  $\phi$ is Lipschitz continuous.

  Next, we shall use the Rademacher's theorem on path group $\pc(G)$
  to show that $\phi$ is in the Sobolev space. Before this, we
  introduce some basic notions. Let
  \begin{equation}\label{2.5}
  H(\G)=\Big\{h:[0,1]\ra \G;\,h(0)=0,\,|h|_H^2=\int_0^1\big|\dot{h}(t)\big|_\G^2\,\d
  t<+\infty\Big\},
  \end{equation} where dot $\cdot$ stands for the derivative with respect
  to $t$.
  Let $F:\pc(G)\ra \R$ be a measurable function. We set
  $$D_h F(\gamma)=\frac{\d}{\d\veps}\Big|_{\veps=0}F(\gamma e^{\veps
  h}),\quad h\in H(\G),\ \gamma\in \pc(G).$$ A function $F\in
  L^2(\mu)$ is said to be in the Sobolev space $\mathbf D_1^2(\mu)$
  if there exists $\nabla F\in L^2(\mu;H(\G))$ such that for each
  $h\in H(\G)$, it holds $D_h F=\la\nabla F,h\raa_{H}$
  in $L^{2-}(\mu)$, where $L^{2-}(\mu)=\bigcap_{p<2}L^p(\mu)$ and
  $$\la h_1,h_2\raa_H=\int_0^1\la
  \dot{h}_1(t),\dot{h}_2(t)\raa_\G\,\d t,\quad  h_1,\,h_2\in H(\G).$$
  A function $F:\pc(G)\ra \R$ is said to be cylindrical if
  $$F(\gamma)=f(\gamma(t_1),\ldots,\gamma(t_n)), \quad  f\in C^\infty(G^n),\ 0<t_1<\ldots<t_n\leq 1, \ n\in \N.$$
  Let $\mathbf{Cyln}(\pc(G))$ denote the space of all cylindrical
  functions. Due to \cite{Ai}, %S. Aida, Sobolev space over loop groups, J. Funct. Anal. 127(1995), 155-172.
  the space $\mathbf{Cyln}(\pc(G))$ is dense in $\mathbf D_1^2(\mu)$.
  Now we introduce the third distance, Cameron-Martin distance
  $d_\pc$, on $\pc(G)$, that is, for $\gamma_1,\,\gamma_2\in
  \pc(G)$,
  \begin{equation}\label{2.6}
  d_{\pc}(\gamma_1,\gamma_2)=\Big(\int_0^1\big|v(t)^{-1}\dot{v}(t)\big|_\G^2\,\d
  t\Big)^{1/2},\quad \text{if}\ v=\gamma_1^{-1}\gamma_2\ \text{absolutely
  continuous};
  \end{equation} otherwise, set $d_\pc(\gamma_1,\gamma_2)=+\infty$.
  It's easy to check that $d_\infty(\gamma_1,\gamma_2)\leq
  d_\pc(\gamma_1,\gamma_2)$ for all $\gamma_1,\,\gamma_2\in \pc(G)$.
  According to the Rademacher's theorem \cite[Theorem 1.5]{Sh} and
  discussions in subsection 2.1 therein, we obtain that
  \begin{lem}\label{t2.2}
  Any bounded
  $d_\pc$-Lipschitz continuous function $F$ on $\pc(G)$ belongs to $\mathbf
  D_1^2(\mu)$.
  \end{lem}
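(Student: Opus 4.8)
The plan is to reduce the lemma to the Rademacher-type theorem on path groups, \cite[Theorem 1.5]{Sh}. By the definition of $\mathbf D_1^2(\mu)$ recalled above, and since $F$ is bounded (hence $F\in L^2(\mu)$), it suffices to construct a map $\nabla F\in L^2(\mu;H(\G))$ — in fact one with $|\nabla F|_H$ essentially bounded — such that $D_hF=\la\nabla F,h\raa_H$ in $L^{2-}(\mu)$ for every $h\in H(\G)$. First I would record the behaviour of $F$ along a fixed Cameron--Martin direction. For $h\in H(\G)$ one has $\sup_{t\in[0,1]}|h(t)|_\G\le|h|_H$ (as $h(0)=0$), so $e^{\veps h}:t\mapsto\exp(\veps h(t))$ is an absolutely continuous path lying in the Cameron--Martin subgroup of $\pc(G)$ for every $\veps\in\R$, and $\veps\mapsto e^{\veps h}$ is a one-parameter subgroup of $\pc(G)$. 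Using left-invariance of $d_\pc$ together with the one-parameter subgroup property and the formula for the right-trivialized derivative of $t\mapsto\exp(\veps h(t))$, a short computation gives $d_\pc(\gamma e^{\veps h},\gamma e^{\veps'h})=|\veps-\veps'|\,|h|_H+O(|\veps-\veps'|^2)$; hence, $F$ being $d_\pc$-Lipschitz (with constant $L$, say), for each fixed $\gamma$ the map $\veps\mapsto F(\gamma e^{\veps h})$ is locally Lipschitz on $\R$ and its difference quotients satisfy $\limsup_{\veps\to0}|F(\gamma e^{\veps h})-F(\gamma)|/|\veps|\le L|h|_H$.

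Next I would upgrade differentiability in $\veps$ to differentiability in $\gamma$, which is where the quasi-invariance of $\mu$ enters — a fixed locally Lipschitz function of $\veps$ is differentiable for Lebesgue-a.e.\ $\veps$, but not necessarily at $\veps=0$. Let $A=\{(\veps,\gamma):u\mapsto F(\gamma e^{uh})\text{ is not differentiable at }u=\veps\}$. Each $\veps$-section $A_\gamma$ is Lebesgue-null, so $A$ is $\d\veps\otimes\d\mu$-null, and therefore its $\gamma$-section $A^{\veps_0}$ is $\mu$-null for Lebesgue-a.e.\ $\veps_0$. Since $\veps\mapsto e^{\veps h}$ is a one-parameter subgroup, $F(\gamma e^{uh})=F((\gamma e^{-\veps_0 h})e^{(u+\veps_0)h})$, from which $A^0=A^{\veps_0}\cdot e^{\veps_0 h}$ as subsets of $\pc(G)$; by Driver's quasi-invariance of the Wiener measure $\mu$ under right translation by the Cameron--Martin subgroup (see \cite{Dri}), $\mu$ and its right translate by $e^{\veps_0 h}$ are mutually absolutely continuous, so $\mu(A^0)=0$. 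Hence $D_hF(\gamma)$ exists for $\mu$-a.e.\ $\gamma$, and $|D_hF(\gamma)|\le L|h|_H$ by the estimate of the first step, now applied at the base point $\gamma$.

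Then I would assemble the gradient. Applying the previous step along a countable $|\cdot|_H$-dense family $\{h_n\}\subset H(\G)$ and intersecting the conull sets yields a conull set on which every $D_{h_n}F(\gamma)$ exists; combined with the linearity relation $D_{h_n+h_m}F(\gamma)=D_{h_n}F(\gamma)+D_{h_m}F(\gamma)$ (see the obstacle below) and the bound $|D_{h_n}F(\gamma)|\le L|h_n|_H$, this shows that $h_n\mapsto D_{h_n}F(\gamma)$ extends to a bounded linear functional on $H(\G)$, so the Riesz representation theorem produces $\nabla F(\gamma)\in H(\G)$ with $|\nabla F(\gamma)|_H\le L$ and $\la\nabla F(\gamma),h_n\raa_H=D_{h_n}F(\gamma)$ for all $n$. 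Weak measurability of $\gamma\mapsto\nabla F(\gamma)$ together with separability of $H(\G)$ gives strong measurability, so $\nabla F\in L^\infty(\mu;H(\G))\subset L^2(\mu;H(\G))$; and for an arbitrary $h\in H(\G)$, approximating by the $h_n$ and using the uniform Lipschitz bound together with the a.e.\ convergence of difference quotients from the first step (dominated convergence) gives $D_hF=\la\nabla F,h\raa_H$ in $L^{2-}(\mu)$, which is precisely the defining property of $\mathbf D_1^2(\mu)$.

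The step I expect to be the genuine obstacle is the linearity relation $D_{h+h'}F=D_hF+D_{h'}F$ invoked above: because $\pc(G)$ is non-commutative one has $e^{\veps h}e^{\veps h'}\ne e^{\veps(h+h')}$, so one cannot identify $D_hF(\gamma)+D_{h'}F(\gamma)$ with $D_{h+h'}F(\gamma)$ for a merely Lipschitz $F$ by a naive chain rule, and the second-order discrepancy of the group law must be controlled. This is exactly what the proof of \cite[Theorem 1.5]{Sh} handles — by approximating $F$ with cylindrical functions in $\mathbf{Cyln}(\pc(G))$, applying the classical finite-dimensional Rademacher theorem on the products $G^n$, and passing to the limit with a martingale-convergence argument — so invoking that theorem directly is what allows the lemma to be obtained as an immediate consequence, as stated in the excerpt.
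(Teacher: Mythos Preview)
Your proposal is correct and takes essentially the same approach as the paper: both obtain the lemma as a direct consequence of the Rademacher-type theorem \cite[Theorem~1.5]{Sh} (together with the discussion in Subsection~2.1 there). The paper in fact gives no further argument beyond that citation, so your extended outline of the quasi-invariance/Fubini mechanism and the linearity obstacle is additional motivation rather than a different route; your final sentence, deferring the non-commutativity issue to \cite{Sh} and invoking that theorem directly, is exactly the paper's proof.
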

  Here and in the sequel, a function
  $F$ on a metric space $(X, d)$
  is said to be \textbf{$d$-Lipschitz continuous}, where $d$ is a metric on $X$, if
  there exists some constant $C>0$ such that
  $$|F(x)-F(y)|\leq C d(x,y),\quad
  \forall \,x,\,y\in X.$$
  Due to the fact $$\dd{\gamma_1}{\gamma_2}\leq
  d_\infty(\gamma_1,\gamma_2)\leq d_\pc(\gamma_1,\gamma_2),$$
  it is clear that a $d_{L^2}$-Lipschitz continuous function $F$ is also
  $d_\infty$-Lipschitz
  and $d_\pc$-Lipschitz continuous. Combining $d_{L^2}$-Lipschitz continuity of $\phi$ with
  Lemma \ref{t2.2}, we get
  $\phi$ is in the Sobolev space $\mathbf{D}_1^2(\mu)$.

  \begin{prop}[Key proposition]\label{t2.3} If there exist $\gamma_1$ and $\gamma_2$
  such that
  \begin{equation}\label{2.7}
  \phi(\gamma_1)+\psi(\gamma_2)=\dd{\gamma_1}{\gamma_2}^2,
  \end{equation}
  and $\phi$ is differentiable at $\gamma_1$, then $\gamma_2$ is
  uniquely determined by $\gamma_1$ and $\phi$.
  \end{prop}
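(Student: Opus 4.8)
The plan is to convert the hypothesis into a variational characterization of $\gamma_1$ and then to read off an Euler--Lagrange equation for $\gamma_2$ by perturbing $\gamma_1$ inside $\pc(G)$. Since $\phi=\psi^c$, for every $\Gamma\in\pc(G)$ we have $\phi(\Gamma)\le\dd{\Gamma}{\gamma_2}^2-\psi(\gamma_2)$, while (\ref{2.7}) says equality holds at $\Gamma=\gamma_1$; hence $\gamma_1$ is a global minimizer on $\pc(G)$ of $F(\Gamma):=\dd{\Gamma}{\gamma_2}^2-\phi(\Gamma)$. Fixing $h\in H(\G)$ and testing with the curve $\veps\mapsto\gamma_1e^{\veps h}$, the real function $\veps\mapsto F(\gamma_1e^{\veps h})$ attains its minimum at $\veps=0$. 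Because $\phi$ is differentiable at $\gamma_1$, the term $\veps\mapsto\phi(\gamma_1e^{\veps h})$ is differentiable at $0$ with derivative $D_h\phi(\gamma_1)=\la\nabla\phi(\gamma_1),h\raa_H$, so the whole matter reduces to controlling the first variation of $\Gamma\mapsto\dd{\Gamma}{\gamma_2}^2=\int_0^1\rho(\Gamma(t),\gamma_2(t))^2\,\d t$ at $\gamma_1$ along $h$.

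I would treat this pointwise in $t$. Using $\tfrac{\d}{\d\veps}\big|_{0}\gamma_1(t)e^{\veps h(t)}=\ell_{\gamma_1(t)}h(t)$, for the $t$ with $\gamma_2(t)\notin\cut(\gamma_1(t))$ the function $x\mapsto\rho(x,\gamma_2(t))^2$ is smooth near $\gamma_1(t)$ with gradient $-2\exp_{\gamma_1(t)}^{-1}(\gamma_2(t))$, so the $\veps$-derivative at $0$ is $-2\la\ell_{\gamma_1(t)}^{-1}\exp_{\gamma_1(t)}^{-1}(\gamma_2(t)),h(t)\raa_\G$. On the (closed) set of times where $\gamma_2(t)\in\cut(\gamma_1(t))$ the squared distance is only semiconcave in $x$, but the one-sided first-variation inequality
\[
\limsup_{\veps\to0^+}\frac{\rho(\gamma_1(t)e^{\pm\veps h(t)},\gamma_2(t))^2-\rho(\gamma_1(t),\gamma_2(t))^2}{\veps}\ \le\ \mp\,2\la\ell_{\gamma_1(t)}^{-1}w(t),h(t)\raa_\G
\]
still holds, where $w(t)$ is the initial velocity of any minimizing geodesic from $\gamma_1(t)$ to $\gamma_2(t)$, so $|w(t)|=\rho(\gamma_1(t),\gamma_2(t))\le D$; this is the step that needs hypothesis (H), applied to the continuous curve $t\mapsto\gamma_1(t)^{-1}\gamma_2(t)$ on the closed set $\{t:\gamma_1(t)^{-1}\gamma_2(t)\in\cut(e)\}$ together with a measurable selection, in order to choose $t\mapsto w(t)$ measurably. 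Since a left perturbation moves $\gamma_1(t)$ by at most $O(\veps|h(t)|)$ and $G$ has finite diameter, the difference quotients above are dominated by a fixed $L^1(\d t)$ function, so the reverse Fatou lemma gives
\[
0\ \le\ \limsup_{\veps\to0^+}\frac{F(\gamma_1e^{\veps h})-F(\gamma_1)}{\veps}\ \le\ -2\int_0^1\la\ell_{\gamma_1(t)}^{-1}w(t),h(t)\raa_\G\,\d t-\la\nabla\phi(\gamma_1),h\raa_H,
\]
and running the same estimate with $\gamma_1e^{-\veps h}$ and the \emph{same} selection $w$ yields the opposite inequality. Hence $2\int_0^1\la\ell_{\gamma_1(t)}^{-1}w(t),h(t)\raa_\G\,\d t=-\la\nabla\phi(\gamma_1),h\raa_H$ for every $h\in H(\G)$.

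It remains to extract the pointwise identity. Testing with $h(t)=\int_0^t k(s)\,\d s$ for arbitrary $k\in L^2([0,1];\G)$ (which lies in $H(\G)$, with $h(1)$ free for the path group) and using Fubini, one gets $2\int_s^1\ell_{\gamma_1(t)}^{-1}w(t)\,\d t=-\tfrac{\d}{\d s}\big(\nabla\phi(\gamma_1)\big)(s)$ for a.e.\ $s$; the left-hand side is Lipschitz in $s$, so $\nabla\phi(\gamma_1)$ is twice differentiable in time, $\tfrac{\d}{\d s}(\nabla\phi(\gamma_1))(1)=0$, and $\ell_{\gamma_1(t)}^{-1}w(t)=\tfrac12\,\tfrac{\d^2}{\d t^2}\big(\nabla\phi(\gamma_1)\big)(t)$ for a.e.\ $t$. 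In particular this expression is independent of the selection $w$, so the minimizing geodesic from $\gamma_1(t)$ to $\gamma_2(t)$ is a.e.\ unique; since a geodesic is determined by its starting point and initial velocity,
\[
\gamma_2(t)=\exp_{\gamma_1(t)}\!\big(w(t)\big)=\exp_{\gamma_1(t)}\!\Big(\tfrac12\,\ell_{\gamma_1(t)}\tfrac{\d^2}{\d t^2}\big(\nabla\phi(\gamma_1)\big)(t)\Big)\qquad\text{for a.e.\ }t\in[0,1],
\]
and hence for all $t$ by continuity of $\gamma_2$. Thus $\gamma_2$ depends only on $\gamma_1$ and $\phi$, which is the assertion, and this computation simultaneously gives formula (\ref{opt-e}).

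The main obstacle throughout is the set of times at which $\gamma_2(t)$ lies in the cut locus of $\gamma_1(t)$: there $\rho(\cdot,\gamma_2(t))^2$ fails to be differentiable, one is forced to replace its derivative by the one-sided variational inequality and to select a minimizing geodesic measurably, and this is precisely the difficulty that hypothesis (H) is introduced to overcome.
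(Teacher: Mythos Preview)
Your argument is correct and follows the same overall variational strategy as the paper: perturb $\gamma_1$ by $e^{\varepsilon h}$, bound the first variation of $\int_0^1\rho(\cdot,\gamma_2(t))^2\,\d t$ via competitor curves built on minimizing geodesics, and combine the two one-sided inequalities (using the \emph{same} selection $w$) to obtain the Euler--Lagrange identity $2\int_0^1\la\ell_{\gamma_1(t)}^{-1}w(t),h(t)\raa_\G\,\d t=-\la\nabla\phi(\gamma_1),h\raa_H$. Where you genuinely diverge from the paper is in how you extract pointwise information from this identity. The paper uses bump functions $h_\varepsilon$ concentrating near a fixed $t_0$, and for this to recover $V_{t_0}(1)$ it needs $t\mapsto V_t(1)$ to be continuous there; hypothesis~(H) is invoked precisely to manufacture a selection of geodesics that is piecewise continuous in $t$ (continuous outside an at most countable set), and Lemma~\ref{t2.4} then pins down the geodesic from $V_t(1)$. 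You instead test with $h(t)=\int_0^t k(s)\,\d s$ for arbitrary $k\in L^2$, which by Fubini gives $2\int_s^1\ell_{\gamma_1(t)}^{-1}w(t)\,\d t=-\tfrac{\d}{\d s}(\nabla\phi(\gamma_1))(s)$ a.e.; since the left side is Lipschitz (because $|w(t)|\le D$), one more differentiation yields the a.e.\ pointwise formula for $w$, and continuity of $\gamma_2$ finishes. This is cleaner and also produces (\ref{opt-e}) in one stroke.

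One correction: you attribute the role of hypothesis~(H) to the measurable selection of $t\mapsto w(t)$. That is not what (H) is for; a Borel selection of initial velocities of minimizing geodesics exists on any compact Riemannian manifold by standard measurable-selection theorems, with no extra hypothesis. In the paper (H) is used to upgrade the selection to a \emph{piecewise continuous} one on the set $\{t:\gamma_1(t)^{-1}\gamma_2(t)\in\cut(e)\}$, which is exactly what the bump-function argument needs. Your route, by contrast, requires only the measurable selection and the uniform bound $|w(t)|\le D$; in particular, as written your proof does not actually use~(H) at all.
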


  \begin{proof}
  For $h\in H(\G)$ and $\veps>0$, by the fact $\phi=\psi^c$, we get
  $$\phi(\gamma_1e^{\veps h})+\psi(\gamma_2)\leq
  \dd{\gamma_1e^{\veps h}}{\gamma_2}^2.$$ Subtracting (\ref{2.7})
  from both sides of this inequality yields
  \begin{equation}\label{2.8}
  \begin{split}
  \phi(\gamma_1e^{\veps h})-\phi(\gamma_1)&\leq \dd{\gamma_1e^{\veps
  h}}{\gamma_2}^2-\dd{\gamma_1}{\gamma_2}^2\\
  &=\int_0^1\rho\big(\gamma_1(t)e^{\veps h(t)},\gamma_2(t)\big)^2\,\d t-\int_0^1\rho\big(\gamma_1(t),\gamma_2(t)\big)^2\,\d t.
  \end{split}
  \end{equation}
  For each fixed $t\in[0,1]$, there exits a constant speed geodesic $v_t:[0,1]\ra G$
  such that $v_t(0)=\gamma_2(t)^{-1}\gamma_1(t)$,
  $v_t(1)=e$ and
  $$L(v_t)^2=\int_0^1\big|v_t^{-1}(s)\frac{\d}{\d
  s}v_t(s)\big|_\G^2\,\d s=\rho(\gamma_1(t),\gamma_2(t))^2.$$
  Set $\tilde v_t(s)=v_t(s)e^{(1-s)\veps h(t)},\ s\in [0,1]$. Then
  $\tilde v_t(0)=\gamma_2(t)^{-1}\gamma_1(t)e^{\veps h(t)}$ and $\tilde
  v_t(1)=e$. Hence,
  \begin{equation}\label{2.9}
  \rho\big(\gamma_1(t)e^{\veps
  h(t)},\gamma_2(t)\big)^2\leq L(\tilde v_t)^2.
  \end{equation}
  As $$d_s\tilde v_t(s)=\big(\dot v_t(s)e^{(1-s)\veps h(t)}-\veps
  \tilde v_t(s)h(t)\big)d s,$$ where $d_s$ stands for the derivative with respect to $s$, we get
  \begin{equation*}\begin{split}
  L(\tilde v_t)^2&=\int_0^1\big|\tilde v_t(s)^{-1}\dot{\tilde
  v}_t(s)\big|_\G^2\,\d s\\
  &=\int_0^1\big|\Ad_{e^{-(1-s)\veps
  h(t)}}v_t(s)^{-1}\dot{v}_t(s)-\veps h(t)\big|_\G^2\,\d s\\
  &=\int_0^1\big| v_t(s)^{-1}\dot v_t(s)\big|_\G^2 -2\veps \la
  \Ad_{e^{-(1-s)\veps h(t)}}v_t(s)^{-1}\dot
  v_t(s),h(t)\raa_\G+\veps^2|h(t)|_\G^2\,\d s\\
  &=\rho(\gamma_1(t),\gamma_2(t))^2- 2\veps\int_0^1 \la
  \Ad_{e^{-(1-s)\veps h(t)}}v_t(s)^{-1}\dot
  v_t(s),h(t)\raa_\G\,\d s+\veps^2 |h(t)|_\G^2.
  \end{split}
  \end{equation*}
  Invoking (\ref{2.8}) and (\ref{2.9}), we obtain
  \begin{equation*}
  \phi(\gamma_1e^{\veps h})-\phi(\gamma_1)\leq
  -2 \veps\int_0^1\int_0^1\la \Ad_{e^{-(1-s)\veps h(t)}}v_t(s)^{-1}\dot
  v_t(s),h(t)\raa_\G\,\d s\d t+\veps^2\int_0^1|h(t)|_\G^2\,\d t.
  \end{equation*}
  Dividing both sides by $\veps$, letting $\veps\ra 0^+$ and $\veps\ra 0^-$ respectively, it
  follows
  \begin{align}\label{2.10}
  &\la \nabla \phi(\gamma_1), \,h\raa_H\leq -2\int_0^1\big\la
  \int_0^1v_t(s)^{-1}\dot v_t(s)\,\d s,\, h(t)\big\raa_\G\,\d t,\\
  \label{2.11}
  &\la \nabla \phi(\gamma_1),\, h\raa_H\geq -2\int_0^1\big\la
  \int_0^1v_t(s)^{-1}\dot v_t(s)\,\d s,\, h(t)\big\raa_\G\,\d t.
  \end{align} Set \begin{equation}\label{2.12}
  V_t(u)=\int_0^u v_t(s)^{-1}\dot v_t(s)\,\d s,\quad u\in [0,1],
  \end{equation}
  then we have shown by (\ref{2.10}) (\ref{2.11}) that
  \begin{equation}\label{2.13}
  \la \nabla \phi(\gamma_1),\,h\raa_H=-2\int_0^1\la
  V_t(1),\,h(t)\raa_\G\,\d t,
  \end{equation}
  which implies that if $V_t(1)$ as a
  function of $t$ is continuous at some $t_0\in [0,1]$ then
  $V_{t_0}(1)$ is uniquely determined. In fact, take a sequence of
  smooth functions $h_\veps$ such that $0\leq h_\veps\leq 1$, and
  $$ h_\veps(t)=\left\{\begin{array}{ll}1&t\in[t_0-\veps, t_0+\veps]\cap[0,1],\\
  0 &t\not\in [t_0-2\veps,t_0+2\veps]\cap [0,1].
  \end{array}\right.$$Set $\{e_1,\ldots,e_d\}$ be an orthonormal  basis of $\G$. We have
  \begin{equation}\label{2.14}
  \la V_{t_0}(1),e_i\raa_\G=\lim_{\veps\ra 0}\int_0^1\la V_t(1),h_\veps(t)e_i\raa_\G\,\d
  t=\lim_{\veps\ra 0}-\frac12\la \nabla
  \phi(\gamma_1),h_\veps e_i\raa_H.
  \end{equation}
  Moreover, as $\la\nabla\phi(\gamma_1),h_\veps e_i\raa_H$ is measurable
  from $\pc(G)$ to $\R$, the limitation $\la V_{t_0}(1),e_i\raa_\G$ is also
  measurable. Then
   $V_{t_0}(1)=\sum_i \la V_{t_0}(1),e_i\raa_\G e_i$ is  measurable with
  respect to the variable $\gamma_1$.

  We shall show below the following assertion: according to our assumption (H), we can always choose a family of minimizing
  geodesics $(v_t)_{t\in [0,1]}$ such that $v_t(0)=\gamma_2^{-1}(t)\gamma_1(t)$, $v_t(1)=e$, and there exists an at most countable subset $\Omega\subset [0,1]$  such that $t\mapsto \dot{v}_t(1)$ is continuous on $[0,1]\backslash \Omega$. Hence, $t\mapsto V_t(1)$ is continuous on
  $[0,1]\backslash \Omega$. If this assertion is correct, then $V_t(1)$ is uniquely determined by $\nabla \phi(\gamma_1)$ at all $t\in [0,1]
  \backslash \Omega$. Then, due to Lemma 3.4 below, we know that $V_t(1)$ determines  uniquely a geodesic $v_t:[0,1]\ra G$ so that $v_t(1)=e$.
  Therefore, $\gamma_2(t)$ is uniquely determined by $\nabla \phi(\gamma_1)$ at $t\in [0,1]\backslash \Omega$. Since $t\mapsto \gamma_2(t)$ is
  continuous, then we get the desired result that $\gamma_2$ is uniquely determined by  $\phi$ and $\gamma_1$. Now we proceed to the proof of previous assertion.

  \noindent\textbf{Case 1:}
  If $\{\gamma_2(t)^{-1}\gamma_1(t),\ t\in [0,1]\}$ does not go across the cut locus of $e$ in
  $G$, then there exists a unique family of minimizing geodesics
  $\big(v_t\big)_{t\in [0,1]}$ such that $v_t(0)=\gamma_2^{-1}(t)\gamma_1(t)$, $v_t(1)=e$, and $t\mapsto\dot{v}_t(1)$ is
  continuous.
  The geodesic equation guarantees that $t\mapsto v_t(s)^{-1}\dot{v}_t(s)$ for $s\in [0,1]$ is also continuous, which implies the continuity of $t\mapsto V_t(1)$ for $t\in [0,1]$.

  \noindent\textbf{Case 2:} If $\gamma_2^{-1}(t)\gamma_1(t)$ goes across the cut locus of $e$. Then the continuity
  of $\gamma_1(t),\,\gamma_2(t)$ and the closeness of the cut locus of $e$
  yield that the set $\Omega$ containing all $t$ such that $\gamma_2^{-1}(t)\gamma_1(t)$
  enters or leaves the cut locus of $e$ is not empty and  at most countable.
  So the set $I_1:=\{t\in [0,1]\backslash\Omega;\ \gamma_2^{-1}(t)\gamma_1(t)\not\in \cut{(e)}\}$ and the set
  $I_2:=\{t\in [0,1]\backslash\Omega;\ \gamma_2^{-1}(t)\gamma_1(t)\in \cut{(e)}\}$ can both be represented as the union of at most countable
  open intervals. For each open interval $(s_1,s_2)\subset I_1$, above discussion in case 1 show that there exist a curve $t\mapsto V_t(1)$ for
  $t\in (s_1,s_2)$. For each open interval $(s_1',s_2')\subset I_2$, our assumption (H) may guarantee that we can choose a family of geodesics $(v_t)_{t\in (s_1',s_2')}$ such that $v_t(0)=\gamma_2^{-1}\gamma_1(t)$, $v_t(1)=e$, and $t\mapsto \dot v_t(0)$ is continuous on $(s_1',s_2')$.
  This yields $t\mapsto V_t(1)$ is continuous on $(s_1',s_2')$.

  In all, we can choose a $(V_t(1))_{t\in [0,1]}$ such that $t\mapsto V_t(1)$ is continuous on $[0,1]\backslash \Omega$. Therefore, we have proved
  the assertion and complete the proof of this proposition.
  \end{proof}

  \begin{lem}\label{t2.4}
  Using the notations as above. Then $V_t(1)$ determines uniquely a
  minimizing geodesic $v_t:[0,1]\ra G$ such that % $v_t(0)=\gamma_2(t)^{-1}\gamma_1(t)$ and
  $v_t(1)=e$.
  \end{lem}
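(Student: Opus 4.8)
The plan is to reconstruct the geodesic $v_t:[0,1]\ra G$ directly from $V_t(1)$ by solving the geodesic ODE backwards in $s$ from the known endpoint $v_t(1)=e$. Recall that for a constant-speed geodesic $v_t$ on the Lie group with left-invariant metric, the quantity $u\mapsto v_t(s)^{-1}\dot v_t(s)\in\G$ satisfies the Euler equation $\frac{d}{ds}\big(v_t(s)^{-1}\dot v_t(s)\big)=-\ad^\ast_{v_t(s)^{-1}\dot v_t(s)}\big(v_t(s)^{-1}\dot v_t(s)\big)$, obtained from the geodesic equation (\ref{b15})--(\ref{b16}) together with (\ref{b13.5}). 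Writing $\xi_t(s):=v_t(s)^{-1}\dot v_t(s)$, this is an autonomous first-order ODE on $\G$, so $\xi_t(\cdot)$ is uniquely determined by any single value $\xi_t(s_0)$. By the definition (\ref{2.12}), $V_t(u)=\int_0^u\xi_t(s)\,\d s$, and since the geodesic has constant speed $|\xi_t(s)|_\G\equiv|\xi_t(1)|_\G=L(v_t)$, the derivative $\dot V_t(1)=\xi_t(1)$ is exactly the initial data needed.

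The steps I would carry out are: first, derive the Euler equation for $\xi_t(s)$ from the geodesic equation, noting that the Christoffel symbols are curve-independent (as emphasized after (\ref{b16})), so the equation has the stated autonomous form. Second, observe that $V_t(1)=\int_0^1\xi_t(s)\,\d s$ and $\dot V_t(1)=\xi_t(1)$; thus from $V_t(1)$ alone we must recover $\xi_t(1)$. Here I would argue that $V_t(1)$, as a point of $\G\cong T_eG$, is precisely $-$the ``inverse exponential'' data: running the geodesic flow from $e$ with initial velocity $-\xi_t(1)$ for unit time lands at $v_t(0)=\gamma_2(t)^{-1}\gamma_1(t)$, and one checks $V_t(1)=-\int_0^1\eta_t(s)\,\d s$ where $\eta_t$ is the velocity field of that forward geodesic; since $v_t$ is a \emph{minimizing} geodesic and, by assumption (H)/the discussion of the cut locus, we have either avoided the cut locus or made a measurable selection there, the map $\xi_t(1)\mapsto V_t(1)$ is injective on the relevant range. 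Third, once $\xi_t(1)$ is recovered, solve the autonomous Euler ODE backwards from $s=1$ to $s=0$ with terminal condition $v_t(1)=e$, $\dot v_t(1)$ determined by $\xi_t(1)$; by Picard--Lindelöf this yields a unique curve $v_t$, which is the desired minimizing geodesic with $v_t(1)=e$.

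The main obstacle is the injectivity claim in the second step: a priori $V_t(1)=\int_0^1\xi_t(s)\,\d s$ is an \emph{average} of the velocity along the geodesic, not the initial velocity itself, and two distinct geodesics could conceivably share the same average velocity. I expect this to be resolved exactly as in the Key Proposition: because $v_t$ is required to be length-minimizing from $v_t(0)=\gamma_2(t)^{-1}\gamma_1(t)$ to $e$, and because on the cut locus assumption (H) lets us fix a continuous (hence consistent) selection of the initial direction, the correspondence between the minimizing geodesic and the pair $\big(v_t(0),\dot V_t(1)\big)$ is one-to-one; and $V_t(1)$ together with the endpoint constraint $v_t(1)=e$ pins down $v_t(0)$ via the geodesic exponential map (using surjectivity of $\exp_e$ for compact connected $G$, noted in Section 2). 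Translating the ``average velocity'' $V_t(1)$ into genuine initial data is the one place where I would need to be careful, invoking constant speed and the minimality to rule out the pathological coincidences; everything after that is the standard ODE uniqueness argument.
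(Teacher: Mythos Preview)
Your proposal has a genuine gap at precisely the point you flag: recovering $\xi_t(1)=\dot V_t(1)$ from the single vector $V_t(1)=\int_0^1\xi_t(s)\,\d s$. The Euler equation for $\xi_t$ is indeed an autonomous first-order ODE on $\G$, so $\xi_t(1)$ would determine the whole curve; but you never establish that the map $\xi_t(1)\mapsto V_t(1)$ is injective. Your suggested fix (``resolved exactly as in the Key Proposition'', invoking minimality, assumption (H), and the exponential map) is circular in spirit: the Key Proposition itself \emph{uses} this lemma, and assumption (H) only lets you select a continuous family of geodesics when the path lives in $\cut(e)$; neither gives you the injectivity of an averaging map on $\G$. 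So the argument as written does not close.

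The paper avoids this obstacle by a different, purely variational route that never isolates $\xi_t(1)$. Since $v_t$ is length-minimizing with fixed endpoints, perturbing by $v_{t,\veps}(s)=v_t(s)e^{\veps c(s)a}$ with $c(0)=c(1)=0$ and differentiating $L(v_{t,\veps})^2$ at $\veps=0$ yields the first-variation identity
\[
\la V_t(1),\,c'(1)a\raa_\G=\int_0^1\la V_t(s),\,c''(s)a\raa_\G\,\d s.
\]
If $\tilde v_t$ is another minimizing geodesic with $\tilde v_t(1)=e$ and $\tilde V_t(1)=V_t(1)$, the same identity holds for $\tilde V_t$, so $\int_0^1\la V_t(s)-\tilde V_t(s),\,c''(s)a\raa_\G\,\d s=0$ for all such $c$ and all $a\in\G$. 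By density of $\{c''a\}$ in $L^2([0,1];\G)$ and continuity of $V_t,\tilde V_t$, one gets $V_t\equiv\tilde V_t$, hence $v_t^{-1}\dot v_t=\tilde v_t^{-1}\dot{\tilde v}_t$, and then ODE uniqueness (your third step) finishes. The key difference is that the paper uses minimality to produce an \emph{integral} constraint linking $V_t(1)$ to the whole function $V_t(\cdot)$, rather than trying to invert an averaging map; this is what your sketch is missing.
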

  \begin{proof}
  Let $a\in \G,\,\veps\in \R$ and $c\in C^2([0,1],\R)$ such that
  $c(0)=c(1)=0$. Consider $v_{t,\veps}(s)=v_t(s)e^{\veps c(s) a}, \
  s\in [0,1]$. Then $v_{t,\veps}(0)=v_t(0)$ and
  $v_{t,\veps}(1)=v_t(1)$. $$d_sv_{t,\veps}(s)=v_{t,\veps}(s)\Big(\Ad_{e^{-\veps c(s)a}}v_t(s)^{-1}\dot v_t(s)+\veps c'(s)a\Big)d s,$$
  and
  $$L(v_{t,\veps})^2=\int_0^1\Big|\Ad_{e^{-\veps c(s)a}}v_t(s)^{-1}\dot v_t(s)+\veps
  c'(s)a\Big|_\G^2\,\d s.$$
  Since $\veps\mapsto L(v_{t,\veps})^2$ arrives its minimum at
  $\veps=0$, we get
  \begin{align*}
  0=&\frac{\d}{\d \veps} L(v_{t,\veps})^2=2\int_0^1\la
  v_t(s)^{-1}\dot v_t(s),c'(s) a\raa_\G\,\d s\\
  &=2\int_0^1\la V_t'(s),c'(s)a\raa_\G\,\d s\\
  &=2\la V_t(1), c'(1)a\raa_\G-2\la V_t(0),c'(0)
  a\raa_\G-2\int_0^1\la V_t(s),c''(s)a\raa_\G\,\d s.
  \end{align*} This yields
  \begin{equation}\label{2.15}
  \la V_t(1),c'(1)a\raa_\G=\int_0^1\la V_t(s),c''(s)a\raa_\G\,\d s.
  \end{equation}
  Assume $\tilde{v}_t$ be another minimizing geodesic such that
  $\tilde v_t(1)=e$ and $\tilde{V}_t(1)=V_t(1)$, where $\tilde
  V_t(u)=\int_0^u\tilde v_t(s)^{-1}\dot{\tilde v}_t(s)\,\d s$, $u\in
  [0,1]$. Then analogous deduction yields
  $$\int_0^1\la \tilde V_t(s),c''(s)a\raa_\G\,\d s=\int_0^1\la
  V_t(s),c''(s) a\raa_\G\,\d s.$$ Since $s\mapsto c''(s)a$ is dense
  in $L^2(\mu;\G)$,
  $$V_t(s)=\tilde V_t(s),\quad \text{for almost every}\ s\in
  [0,1].$$
  The continuity of $s\mapsto V_t(s)$ and $s\mapsto \tilde V_t(s)$
  yields
  $$V_t(s)=\tilde V_t(s),\quad \text{for all}\ s\in [0,1].$$
  Therefore, $$\frac{\d}{\d s} V_t(s)=\frac{\d}{\d s}\tilde V_t(s),\
  i.e.\ v_t(s)^{-1}\dot v_t(s)=\tilde v_t(s)^{-1}\dot{\tilde
  v}_t(s)=:k_t(s).$$ Since the solution of
  $$d_s v_t(s)=v_t(s)k_t(s) d s,\quad v_t(1)=e$$ is unique, it
  follows that $\tilde v_t(s)=v_t(s)$ for all $s\in [0,1]$. In
  particular, $\tilde{v}_t(0)=v_t(0)$. The proof is complete.
  \end{proof}

  \noindent\textbf{Proof of Theorem \ref{t1.1} for $p=2$.}
  We have shown in (\ref{2.4}) that
  $$W_2(\nu,\sigma)^2=\int_{\pc(G)}
  \phi(\gamma_1)\,\d\nu+\int_{\pc(G)}\psi(\gamma_2)\,\d\sigma.$$
  By Lemma \ref{2.2}, $\phi$ is in $\mathbf{D}_1^2(\mu)$. So $\phi$
  is $\mu$-almost everywhere differentiable, so does also with
  respect to $\nu$ by the absolute continuity of $\nu$ relative to
  $\mu$. Since $d_{L^2}(\cdot,\cdot)$ is continuous from $\pc(G)\times \pc(G)$ to $ \R$,
  and $\C(\nu,\sigma)$ is tight, there exists an optimal transport plan $\pi\in \C(\nu,\sigma)$
  such that
  $$ W_2(\nu,\sigma)^2=\int_{\pc(G)\times\pc(G)} \dd{\gamma_1}{\gamma_2}^2\,\pi(\d\gamma_1,\d\gamma_2).$$
  Hence,
  $$\int_{\pc(G)\times\pc(G)}\phi(\gamma_1)+\psi(\gamma_2)\,\pi(\d\gamma_1,\d\gamma_2)
  =\int_{\pc(G)\times\pc(G)}\dd{\gamma_1}{\gamma_2}^2\,\pi(\d\gamma_1,\d\gamma_2).$$
  As $\phi=\psi^c$, there exists a measurable set
  $\Omega_1\subset\pc(G)\times \pc(G)$ such that $\pi(\Omega_1)=1$, and
  $$\phi(\gamma_1)+\psi(\gamma_2)=\dd{\gamma_1}{\gamma_2}^2,\quad
  \forall\,(\gamma_1,\gamma_2)\in\Omega_1.
  $$
  Since $\phi$ is $\nu$-a.e. differentiable, there exists a
  measurable set $A\subset \pc(G)$ with $\nu(A)=1$ on which $\phi$
  is differentiable everywhere. Let
  $\Omega=\Omega_1\cap(A\times\pc(G))$, then $\pi(\Omega)=1$.

  For a point $(\gamma_1,\gamma_2)\in \Omega$, Proposition \ref{2.3} yields
  that $\gamma_2\in \pc(G)$ is uniquely determined by $\gamma_1$ and $\phi$ such that
  $$\phi(\gamma_1)+\psi(\gamma_2)=\dd{\gamma_1}{\gamma_2}^2.$$
  Denote this map by $\gamma_2=\mathscr T(\gamma_1)$.
  Assume $\mathscr T$ is measurable, then for any measurable
  function $F$ on $\pc(G)\times\pc(G)$,
  \begin{align*}
  \int_{\pc(G)\times\pc(G)}F(\gamma_1,\gamma_2)\,\pi(\d\gamma_1,\d\gamma_2)&=
  \int_{\pc(G)\times\pc(G)}F(\gamma_1,\mathscr{T}(\gamma_1))\,\pi(\d\gamma_1,\d\gamma_2)\\
  &=\int_{\pc(G)}F(\gamma_1,\mathscr{T}(\gamma_1))\,\nu(\d\gamma_1).
  \end{align*} This implies that
  \begin{equation}\label{2.16}\pi=(id\times\mathscr T)_\ast \nu\quad \text{and}\ (\mathscr T)_\ast \nu=\sigma.\end{equation}

  If there exists another measurable map $\mathscr S:\pc(G)\ra \pc(G)$ such that $(\mathscr S)_\ast \nu=\sigma$ and
  $$W_2(\nu,\sigma)^2=\int_{\pc(G)}\dd{\gamma}{\mathscr S(\gamma)}^2\,\nu(\d\gamma).$$
  Then the measure $\tilde\pi:=(id\times \mathscr S)_\ast \nu$ is an optimal transport map. Since in above discussion  $\pi$ is arbitrary
  optimal transport plan in $\C(\nu,\sigma)$, applying (\ref{2.16}) to $\tilde\pi$, we obtain
  $$\tilde\pi=(id\times\mathscr T)_\ast \nu,\quad \text{and} \ \mathscr S=\mathscr T,\ \text{$\nu$-a.e.}.$$
  This proves the uniqueness of $\mathscr T$.

  Now we proceed to the measurability of $\mathscr T $.

  Let $\{\beta_n,\,n\geq 1\}\subset C^\infty([0,1],\R)$ be an
  orthonormal basis of the space $H(\R)=\big\{f:[0,1]\ra \R;\
  f(0)=0,\, \int_0^1|f'(s)|^2\d s<+\infty\big\}$. Define
  $$c_n(t)=\int_0^t\beta_n(s)\,\d s-t\int_0^1\beta_n(s)\,\d s.$$
  Let $\{e_1,\ldots,e_d\}$ be an orthonormal basis of $\G$. Then
  $\{\beta_n e_i,\, n\geq 1,\,i=1,\ldots,d\}$ be an orthonormal
  basis of $H(\G)$. Let $U_t(u)=\int_0^u V_t(s)\,\d s$. Then
  (\ref{2.15}) can be rewritten as
  \begin{equation*}
  \la V_t(1),c_n'(1)e_i\raa_\G=\int_0^1\la \dot{U}_t(s),\beta_n'(s)
  e_i\raa_\G\,\d s=\la U_t,\beta_n e_i\raa_{H(\R)}.
  \end{equation*}
  It follows that
  \begin{equation}\label{2.17}
  U_t(s)=\sum_{n\geq 1}\sum_{i=1}^d \la V_t(1),c_n'(1)e_i\raa_\G
  \beta_n(s)e_i.
  \end{equation}
  We have shown in the proof of Proposition \ref{2.3} that $V_t(1)$ is measurable
  with respect to $\gamma_1$ if $V_t(1)$ is continuous at $t$. So
  for $t\not \in \Omega$, $U_t(s)$ is also measurable with respect
  to $\gamma_1$ for each $s\in[0,1]$, so does $V_t(s)$. Then by the
  definition (\ref{2.12}), $$ d_sv_t(s)=v_t(s)dV_t(s),\quad
  v_t(1)=e.$$
  Therefore for each $t\in [0,1]\backslash \Omega$, $v_t(s)$ is a
  measurable mapping of $\gamma_1$ for each $s\in [0,1]$. Then we
  obtain the measurability of $\gamma_1\mapsto
  \gamma_2(t)=\gamma_1(t)v_t(0)^{-1}$ for $t\in [0,1]\backslash \Omega$.

  Take a subdivision  $\mathcal P=\{0<1/N<\cdots<(N-1)/N<1\}$ of [0,1]. We can take $N$
  large enough so that for each $i=0,\ldots,N-1$,
  $\gamma_2(i/N)$ and $\gamma_2((i+1)/N)$ are not in the cut locus of
  each other. Define a continuous curve
  $\gamma^{(N)}(t)=\gamma_2(t)$ for $t\in\mathcal P$ and connect
  $\gamma^{(N)}(i/N)$ with $\gamma^{(N)}((i+1)/N)$ by the unique
  minimizing geodesic. $\gamma^{(N)}$ is continuous, and $\gamma_1\mapsto\gamma^{(N)}$
  is measurable due to the measurability of the solution of geodesic equation.
  Letting $N$ tend to $+\infty$, $\gamma^{(N)}$ converges
  uniformly to $\gamma_2$, so $\gamma_1\mapsto\gamma_2=\mathscr T(\gamma_1)$
  is also measurable. Therefore, we have shown the measurability
  of the map $\mathscr T$, which shows the existence and uniqueness of optimal transport map in Theorem \ref{t1.1}.

  To complete
  the proof of this theorem, it remains to prove the explicit expression of the optimal transport map $\mathscr T$. As in Proposition \ref{t2.3}, for each $t\in [0,1]$, there exists a constant geodesic $(v_t(s))_{s\in[0,1]}$ on $G$ connecting $\gamma_2(t)^{-1}\gamma_1(t)$ to $e$. For any given continuously differentiable function $c:[0,1]\ra \R$ with $c(0)=c(1)=0$, define $\hat v_{t,\veps}(s)=v_t(s)e^{c(s)\veps h(t)}$ for $h\in H$ and $\veps\in \R$. Then $\hat v_{t,\veps}(0)=v_t(0)$, $\hat v_{t,\veps}(1)=v_t(1)$. Thus the function $\veps\mapsto L(\hat v_{t,\veps})^2$ attains its minimum value at $\veps=0$, which yields that
  \[\frac{\d}{\d \veps}\Big|_{\veps=0}L(\hat v_{t,\veps})^2= \int_0^1 \la v_t^{-1}(s)\dot{v}_t(s),c'(s)h(t)\raa_{\G}\d s=0.\]
  Taking $c(s)=\sin(k\pi s)$ for $k\in \Z$ and $h(t)= a\in \G$ in previous equation,  we obtain
  \begin{equation}\label{e-1.1}
  \int_0^1\la v_t^{-1}(s)\dot{v}_t(s),\cos(k\pi s)a\raa_{\G}\d s=0.
  \end{equation}
  The  arbitrariness of $k\in \Z$ and $a\in\G$ means that $s\mapsto v_t^{-1}(s)\dot{v}_t(s)$ is a constant function over $[0,1]$. According to \eqref{2.12} and \eqref{2.13}, if $\phi$ is differentiable at $\gamma_1$, then
  \[V_t(1)=\int_0^1v_t^{-1}(s)\dot{v}_t(s)\d s=v_t^{-1}(0)\dot{v}_t(0),\] and
  \begin{equation} \label{e-1.4}
  \int_s^1 V_t(1)\d t=-\frac 12 \frac{\d}{\d s}\big(\nabla \phi(\gamma_1)\big)(s).
  \end{equation}
  Since $t\mapsto V_t(1)$ is continuous on $[0,1]\backslash \Omega$ as shown in Lemma \ref{t2.3}, \eqref{e-1.4} yields that
  \begin{equation}\label{e-1.5}
  V_t(1)=\frac{1}{2}\frac{\d ^2}{\d t^2}\big(\nabla \phi(\gamma_1)\big)(t),\quad \text{$t\in [0,1]\backslash \Omega$}.
  \end{equation}
  As $s\mapsto \gamma_2(t)v_t(s)$ is a geodesic connecting $\gamma_1(t)$ to $\gamma_2(t)$, it can be expressed in terms of geodesic exponential map as
  \begin{equation}\label{e-1.2}
  \gamma_2(t)=\exp_{\gamma_1(t)}\Big(\ell_{\gamma_1(t)}v_t(0)^{-1}\dot{v}_t(0)\Big)
  =\exp_{\gamma_1(t)}\Big(\frac12\ell_{\gamma_1(t)}\frac{\d^2}{\d t^2}\big(\nabla \phi(\gamma_1)\big)(t)\Big).
  \end{equation}
  By Lemma \ref{t2.2}, $\phi$ is in $\mathbf D_1^2(\mu)$ and is $\mu$-almost surely differentiable. The expression \eqref{e-1.2} means that for $\mu$-a.e. $\gamma\in \pc(G)$
  \begin{equation}\label{e-1.3}
  \mathscr T (\gamma)(t)=\exp_{\gamma(t)}\Big(\frac1 2\ell_{\gamma(t)}\frac{\d^2}{\d t^2}\big(\nabla \phi(\gamma)\big)(t)\Big),\quad t\in[0,1]\backslash\Omega.
  \end{equation}
  We have completed the proof till now.\hfill\fin

\section{Proof of main result: general case $p>1$, $p\neq 2$}
%  \begin{thm}\label{t2.5}
%  Let $\nu$ and $\sigma$ be two probability measures on $\pc(G)$,
%  and assume $\nu$ is absolutely continuous with respect to the
%  Wiener measure $\mu$ on $\pc(G)$. Then for each $p>1$, there
%  exists a unique measurable map $\mathscr T_p:\pc(G)\ra\pc(G)$ such
%  that it pushes $\nu$ forward to $\sigma$ and
%  \begin{equation*}
%  W_p(\nu,\sigma)^p=\int_{\pc(G)}\dd{\gamma}{\mathscr
%  T_p(\gamma)}^p\,\d\nu(\gamma).
%  \end{equation*}
%  \end{thm}

  Now we shall prove Theorem \ref{1.1} for general $p>1$, $p\neq 2$. Recall that for two probability measures
  $\nu$ and $\sigma$
  on $\pc(G)$, define the $L^p$-Wasserstein distance between them
  by:
  \begin{equation}\label{2.18}
  W_p(\nu,\sigma)=\inf_{\pi}\Big\{\int_{\pc(G)\times\pc(G)}
  \dd{\gamma_1}{\gamma_2}^p\,\pi(\d\gamma_1,\d\gamma_2)\Big\}^{1/p},
  \end{equation} where the infimum is taken over $\C(\nu,\sigma)$.
  The difficulty in the case $p>1$ and $p\neq2$ is to prove the
  uniqueness of $\gamma_2$ by the equation
  $$\phi(\gamma_1)+\psi(\gamma_2)=\dd{\gamma_1}{\gamma_2}^p.$$
  We get around this difficulty by using a  more delicate
  variational method than the method used in the proof of Proposition \ref{t2.3}.

  \noindent\textbf{Proof of Theorem 1.1 for $p>1$ and $p\neq 2$.}
  According to Theorem \ref{t2.1}, there exists  a couple of
  functions
  $\phi$ and $\psi$ on $\pc(G)$ such that $\phi=\psi^c$, where the
  function $c(\gamma_1,\gamma_2)=\dd{\gamma_1}{\gamma_2}^p$ now. The
  boundedness of $d_{L^2}$ yields easily that $\phi=\psi^c$ is
  $d_{L^ 2}$-Lipschitz continuous, and hence belongs to $\mathbf
  D_1^2(\mu)$ thanks to Lemma \ref{t2.2}.

  To prove Theorem \ref{t1.1} for $p>1$, we can get along with the same
  lines as the proof for $p=2$. We omit similar steps in the argument, and only
  prove the main different part, which is
  to prove that: if it holds
  $$ \phi(\gamma_1)+\psi(\gamma_2)=\dd{\gamma_1}{\gamma_2}^p,$$
  where $p>1$, then $\gamma_2$ is uniquely determined by $\gamma_1$
  and $\phi$. In fact, let $v_t:[0,1]\ra G$ be a constant speed geodesic such that
  $v_t(0)=\gamma_2(t)^{-1}\gamma_1(t)$, $v_t(1)=e$ and
  $L(v_t)^2=\rho(\gamma_1(t),\gamma_2(t))^2$. Using the same variation
  as in the argument of Proposition \ref{t2.3} again, we can
  obtain
  \begin{equation}\label{2.19}
  \la \nabla
  \phi(\gamma_1),h\raa_H=-p\dd{\gamma_1}{\gamma_2}^{p-2}\int_0^1\la
  V_t(1),h(t)\raa_\G\,\d t
  \end{equation} instead of formula (\ref{2.13}). This yields that $
  \dd{\gamma_1}{\gamma_2}^{p-2} V_t(1)$ is uniquely determined by
  $\nabla \phi(\gamma_1)$ for almost every $t\in [0,1]$ .  Using the same
  variation as in the argument of Lemma \ref{2.4}, we get formula
  (\ref{2.15}) again
  $$\la V_t(1),c'(1) a\raa_\G=\int_0^1\la V_t(s),c''(s)a\raa_\G\,\d
  s,$$ for any $c\in C^2([0,1],\R)$ with $c(0)=c(1)=0$,  any
  $a\in\G$, and each $t\in[0,1]$. Taking $a=\dd{\gamma_1}{\gamma_2}^{p-2} b$ for $b\in
  \G$, we get
  \begin{equation}\label{2.20}
  \la \dd{\gamma_1}{\gamma_2}^{p-2}V_t(1),c'(1)b\raa_\G=\int_0^1\la
  \dd{\gamma_1}{\gamma_2}^{p-2} V_t(s),c''(s)b\raa_\G\,\d s.
  \end{equation}
  Assume $\tilde\gamma_2\in \pc(G)$
  such that
  $$\phi(\gamma_1)+\psi(\tilde\gamma_2)=\dd{\gamma_1}{\gamma_2}^p.$$
  Let $\tilde v_t:[0,1]\ra G$ be a minimizing geodesic such that
  $\tilde v_t(1)=e$ and $\tilde
  v_t(0)=\tilde\gamma_2(t)^{-1}\gamma_1(t)$. Analogously, define
  $\tilde V_t(u)=\int_0^u\tilde v_t(s)^{-1}\dot{\tilde v}_t(s)\,\d
  s$ and it holds that
  \begin{equation}\label{2.21}
  \la \dd{\gamma_1}{\tilde\gamma_2}^{p-2}\tilde
  V_t(1),c'(1)b\raa_\G=\int_0^1\la
  \dd{\gamma_1}{\tilde\gamma_2}^{p-2}\tilde
  V_t(s),c''(s)b\raa_\G\,\d s.
  \end{equation} Since $\dd{\gamma_1}{\tilde\gamma_2}^{p-2}\tilde
  V_t(1)$ is also determined by $\nabla \phi(\gamma_1)$ for almost
  everywhere $t\in[0,1]$, there exists a subset $\bar\Omega\subset[0,1]$ with full
  Lebesgue measure in $[0,1]$ such that
  \begin{equation}\label{2.22}
  \dd{\gamma_1}{\gamma_2}^{p-2}
  V_t(1)=\dd{\gamma_1}{\tilde\gamma_2}^{p-2}\tilde
  V_t(1),\quad \forall \,t\in \bar\Omega.
  \end{equation}
  Due to the denseness of functions in the form $s\mapsto c''(s) b$ in
  $L^2(\mu;\G)$, and the continuity of $s\mapsto V_t(s)$ and $s\mapsto \tilde V_t(s)$, we get
  \begin{equation}\label{2.23}
  \dd{\gamma_1}{\gamma_2}^{p-2}V_t(s)=\dd{\gamma_1}{\tilde\gamma_2}^{p-2}\tilde
  V_t(s), \quad \forall\, s\in [0,1],\ t\in \bar\Omega.
  \end{equation}
  It follows then
  \begin{equation*}
  \dd{\gamma_1}{\gamma_2}^{p-2}\dot{V}_t(s)=\dd{\gamma_1}{\tilde\gamma_2}^{p-2}\dot{\tilde
  V}_t(s), \quad \text{for a.e.}\, s\in [0,1], \ t\in \bar \Omega,
  \end{equation*} where dot $\cdot$ denotes the derivative relative
  to $s$. Since $v_t(s)$ and $\tilde v_t(s)$ are both minimizing
  geodesics, integrating both sides of previous equation over $s$ from 0 to 1 yields that
  \begin{equation}\label{2.24}
  \dd{\gamma_1}{\gamma_2}^{p-2}\rho(\gamma_1(t),\gamma_2(t))
  =\dd{\gamma_1}{\tilde\gamma_2}^{p-2}\rho(\gamma_1(t),\tilde\gamma_2(t)),\ \
  \forall\,t\in \bar\Omega.
  \end{equation} Then, integrating the square of both sides over $t$ from 0 to 1 yields
  \begin{equation}\label{2.25}
  \dd{\gamma_1}{\gamma_2}^{2(p-1)}=\dd{\gamma_1}{\tilde\gamma_2}^{2(p-1)}.
  \end{equation}
  Combining this with (\ref{2.22}), we obtain
  $$V_t(1)=\tilde V_t(1),\quad \forall \,t\in \bar\Omega.$$ Using Lemma
  \ref{t2.4}, we have $\gamma_2(t)=\tilde\gamma_2(t)$ for $t\in
  \bar\Omega$. The continuity of $\gamma_2(t)$ and $\tilde\gamma_2(t)$
  yields $\gamma_2(t)\equiv \tilde\gamma_2(t)$ for $t\in [0,1]$,
  and hence $\gamma_2\in \pc(G)$ is uniquely determined.
  \hfill \fin

  \noindent\textbf{Proof of Theorem \ref{t1.3}.}
  We only prove the assertion of this theorem for path groups, and the corresponding assertion for loop groups can be proved in the same way.

  For $\nu_0,\,\nu_1\in \p_0(\pc(G))$ with $\nu_0$ being absolutely continuous w.r.t. the Wiener measure $\mu$, according to Theorem \ref{t1.1}, there exists a unique optimal map $\mathscr T:\pc(G)\ra \pc(G)$ such that
  \[\pi_0:=(id\times \mathscr T)_{*}\nu_0\]
  attains the $L^2$-Wasserstein distance between $\nu_0$ and $\nu_1$, i.e.
  \[W_2(\nu_0,\nu_1)^2=\int_{\pc(G)\times\pc(G)}\!\!\!d_{L^2}(\gamma_1,\gamma_2)^2\pi_0(\d\gamma_1,\gamma_2).\]
  Let $\phi$ and $\psi$ be the Kantorovich potentials, then
  \[W_2(\nu_0,\nu_1)^2=\int_{\pc(G)\times\pc(G)}\!\!\!\!\big(\phi(\gamma_1)+\psi(\gamma_2)\big)\pi_0(\d\gamma_1,\d \gamma_2).\]
  Then the support of $\pi_0$ is clearly located in the set
  \[A=\{(\gamma_1,\gamma_2)\in \pc(G)\times\pc(G);\ \phi(\gamma_1)+\psi(\gamma_2)=d_{L^2}(\gamma_1,\gamma_2)^2\}.\]
  For $\gamma_1,\,\gamma_2$ satisfying
  \[\phi(\gamma_1)+\psi(\gamma_2)=d_{L^2}(\gamma_1,\gamma_2)^2,\]
  in the proof of Proposition \ref{t2.3}, we have shown that for each $t\in [0,1]$ there exists a constant speed geodesic $v_t:[0,1]\ra G$ such that $v_t(0)=\gamma_2(t)^{-1}\gamma_1(t)$, $v_t(1)=e$, and
  \[L(v_t)^2=\int_0^1\big|v_t^{-1}(s)\frac{\d}{\d s}v_t(s)\big|_{\G}^2\d s=\rho(\gamma_1(t),\gamma_2(t))^2.\]
  Set $u_t(s)=\gamma_2(t)v_t(s)$ for $s\in[0,1]$, then $u_t(0)=\gamma_1(t)$ and $u_t(1)=\gamma_2(t)$.
  For any given $\lambda\in [0,1]$, let $u^\lambda_\cdot$ be in $\pc(G)$ defined by $u^\lambda_t=u_t(\lambda)$ for $t\in [0,1]$. The distance between $\gamma_1$ and $u^\lambda$ is
  \begin{equation}\label{geo}d_{L^2}(\gamma_1,u^\lambda)=\lambda d_{L^2}(\gamma_1,\gamma_2).
  \end{equation}
  Indeed, the curves $s\mapsto u_t(\lambda s)$ and $s\mapsto u_t(\lambda+(1-\lambda)s)$ connect respectively $\gamma_1(t)$ to $u^\lambda_t$ and $u^\lambda_t$ to $\gamma_2(t)$. Then
  \[\rho(\gamma_1(t),u^\lambda_t)\leq \Big(\int_0^1\lambda^2\big|v_t^{-1}(\lambda s)\frac{\d}{\d s}v_t(\lambda s)\big|^2\d s\Big)^{1/2}=\lambda \rho(\gamma_1(t),\gamma_2(t)).\]
  Similarly, $\rho(u^\lambda_t,\gamma_2(t))\leq (1-\lambda)\rho(\gamma_1(t),\gamma_2(t))$. Together with the triangle inequality, we can get $\rho(\gamma_1(t),u^\lambda_t)=\lambda \rho(\gamma_1(t),\gamma_2(t))$ for all $\lambda\in [0,1]$, and further \eqref{geo} holds. Consequently, $\lambda\mapsto u^\lambda$ is a geodesic in $(\pc(G),d_{L^2})$ connecting $\gamma_1$ to $\gamma_2$.

  Set $\Phi_\lambda(\gamma_1)=u_{\cdot}^\lambda$ and $\nu_\lambda=( \Phi_\lambda)_*\nu_0$ for $\lambda\in[0,1]$. Then
  \[W_2(\nu_0,\nu_\lambda)\leq \Big(\int_{\pc(G)\times\pc(G)}\!\!\! d_{L^2}(\gamma_1,
  u^\lambda)^2\d \nu_0(\gamma_1)\Big)^{1/2}=\lambda W_2(\nu_0,\nu_1),\]
  and
  \[W_2(\nu_\lambda,\nu_1)\leq \Big(\int_{\pc(G)\times\pc(G)}\!\!\! d_{L^2}(u^\lambda,\gamma_2)^2\d \nu_0(\gamma_1)\Big)^{1/2}=(1-\lambda)W_2(\nu_0,\nu_1).\]
  By the triangle inequality, it holds
  \[W_2(\nu_0,\nu_\lambda)=\lambda(\nu_0,\nu_1),\ \ W_2(\nu_\lambda,\nu_1)=(1-\lambda)W_2(\nu_0,\nu_1).\]
  Hence, $\nu_\lambda$  for $\lambda\in [0,1]$ is a geodesic in $\p(\pc(G))$ w.r.t. the Wasserstein distance $W_2$ connecting $\nu_0$ to $\nu_1$. The proof is complete. \hfill\fin

  \section{Optimal transport map on loop groups}
   %In this section, we consider the Mogne-Kantorovich problem with
%   Wasserstein distance induced by some $L^2$-distance on loop
%   group $\lg$. As an application, we get that there exists a measurable
%   map $\mathscr T_p:\lg\ra \lg$ which pushes heat kernel measure
%   forward to pinned Wiener measure, and its reverse $\mathscr
%   T^{-1}$ pushes pinned Wiener measure forward to heat kernel
%   measure on $\lg$.
%   Refer to B. Driver and V. Srimurthy and Aida and B. Driver for
%   the equivalence of pinned Wiener measure and heat kernel measure
%   on loop groups.

   Let $G$ be a connected compact Lie group and its Lie algebra $\G$ is
   endowed with an $\Ad$-invariant metric $\la\,,\raa_\G$.
   Let $$\lg=\{\ell:[0,1]\ra G\,\text{continuous}; \,
   \ell(0)=\ell(1)=e\}.$$
   The product in $\lg$ is defined pointwisely by
   $(\ell_1\cdot\ell_2)(\theta)=\ell_1(\theta)\cdot\ell_2(\theta)$, $\theta\in
   [0,1]$. With the uniform topology
   $$d_\infty(\ell_1,\ell_2)=\sup_{\theta\in
   [0,1]}\rho(\ell_1(\theta),\ell_2(\theta)),$$ where $\rho$ is
   Riemannian distance on $G$, $\lg$ becomes a topological group. Recall
   that
   $$H(\G)=\Big\{h:[0,1]\ra \G;\,h(0)=0,\,|h|_H^2=\int_0^1\big|\dot{h}(t)\big|_\G^2\,\d
  t<+\infty\Big\}.$$
   Let $$H_0(\G)=\big\{h\in H(\G);\,h(0)=h(1)=0\big\}.$$ For $h\in H_0(\G)$,
   set $\displaystyle |h|_{H_0}=\Big(\int_0^1|\dot
   h(\theta)|_\G^2\,\d\theta\Big)^{1/2}$. It has been shown in
   \cite{Mal} that there is a Brownian motion $(g(t))$ on $\lg$.
   In order to be consistent in notations as convention, in the sequel, we shall fix
   $\nu$ to be the law of Brownian motion $g(1)$ on $\lg$, which is
   called heat kernel measure. Let $\mu_0$ denote the pinned Wiener measure on $\lg$.
   Due to \cite{DS}, $\nu$ is absolutely
   continuous with respect to the pinned Wiener measure $\mu_0$.
   According to \cite{AD}, $\mu_0$ is also absolutely continuous with
   respect to heat kernel measure $\nu$.

   For a cylindrical function
   $F:\lg\ra \R$ in the form
   $$F(\ell)=f(\ell(\theta_1),\ldots,\ell(\theta_n)),\quad f\in
   C^\infty(G^n),$$ and $h\in H_0(\G)$, define
   $$(D_h F)(\ell)=\frac{\d}{\d \veps}\Big|_{\veps=0}F(\ell e^{\veps
   h})=\sum_{i=1}^n\la \partial_i
   f,\ell(\theta_i)h(\theta_i)\raa_{T_{\ell(\theta_i)}G},$$ where
   $\partial_i f$ denotes the $i$th partial derivative. The gradient
   operator $\nabla^\lc$ on $\lg$ is defined as
   \begin{equation*}
   \big(\nabla^\lc
   F\big)(\ell)=\sum_{i=1}^n\ell^{-1}(\theta_i)(\partial_i
   f)G(\theta_i,\cdot),
   \end{equation*}
   where $G(\theta_i,\theta):=\theta_i\wedge \theta-\theta_i\theta$.
   Consider $$\mathscr E(F,F):=\int_{\lg}|\nabla^\lc
   F|_{H_0}^2\,\d\nu.$$ Then $\mathscr E$ defined on the set of cylindrical functions is closable, and let
   $\mathbf D_1^2(\nu)$ be the domain of the associated Dirichlet
   form.

   Now, we introduce several distance on $\lg$. Firstly, the
   $L^2$-distance is defined by:
   \begin{equation}\label{4.1}
   \dd{\ell_1}{\ell_2}=\Big(\int_0^1\rho(\ell_1(\theta),\ell_2(\theta))^2\,\d
   \theta\Big)^{1/2},\quad \ell_1,\,\ell_2\in \lg.
   \end{equation}
   Secondly, we shall recall the definition of Riemannian distance
   on $\lg$. In \cite{FS}, it has shown the existence and uniqueness of
   optimal transport map for the Monge-Kantorovich problem with the
   Wasserstein distance defined by the square of Riemannian distance
   on $\lg$.

   A continuous curve $\gamma:[0,1]\ra \lg$ is said to be admissible
   if there exists $z\in H(H_0)$ such that
   \begin{equation}\label{4.2}
   \frac{\partial }{\partial
   t}\gamma(t,\theta)=\gamma(t,\theta)\frac{\partial}{\partial t} z(t,\theta),\quad
   \gamma(0,\theta)=e.
   \end{equation} Here
   $$H(H_0)=\Big\{z:[0,1]\ra H_0(\G);\  z_t=\int_0^t\frac{\partial}{\partial
   s}z(s)\,\d s, \|z\|^2:=\int_0^1|\frac{\partial}{\partial
   s}z(s)|_{H_0}^2\,\d s<+\infty\Big\}.$$ For a continuous curve $\gamma$ on
   $\lg$, if it is admissible, its length is defined by
   $$L(\gamma)=\Big(\int_0^1|\frac{\partial}{\partial s}
   z(s)|_{H_0}^2\,\d s\Big)^{1/2};$$ otherwise, its length
   $L(\gamma)=+\infty$.
   The Riemannian distance $d_L$ on $\lg$ is defined by
   \begin{equation}\label{4.3}
   d_L(\ell_1,\ell_2)=\inf\big\{L(\gamma);\
   \gamma(0)=\ell_1,\,\gamma(1)=\ell_2\big\},
   \end{equation}
   where $\gamma$ runs over the set of all continuous curves on
   $\lg$. It is clear that $d_L$ is left invariant:
   $d_L(\ell\ell_1,\ell\ell_2)=d_L(\ell_1,\ell_2)$,
   $\ell,\,\ell_1,\,\ell_2\in \lg$. It has been shown in \cite[Proposition 3.4]{Sh}
   that for $\ell_1,\,\ell_2\in\lg$, $d_\pc(\ell_1,\ell_2)\leq
   d_L(\ell_1,\ell_2)$. Therefore, it holds
   \begin{equation}\label{4.4}
   d_{L^2}(\ell_1,\ell_2)\leq d_\infty(\ell_1,\ell_2)\leq
   d_\pc(\ell_1,\ell_2)\leq d_L(\ell_1,\ell_2).
   \end{equation}
   According to the Rademacher's theorem \cite[Theorem 1.5]{Sh}, we get
   \begin{lem}\label{t4.1}
   Every $d_L$-Lipschitz (hence, $d_{L^2}$-Lipschitz) continuous
   function $F$ is in $\mathbf D_1^2(\nu)$.
   \end{lem}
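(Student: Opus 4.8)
The plan is to reduce the statement to the $d_L$-Lipschitz case and then to transcribe, in the loop-group setting, the argument behind Lemma \ref{t2.2} for $\pc(G)$. By the chain of inequalities \eqref{4.4}, a $d_{L^2}$-Lipschitz function is $d_L$-Lipschitz with the same constant, so it suffices to take a $d_L$-Lipschitz function $F$ on $\lg$ with Lipschitz constant $C$; for a $d_{L^2}$-Lipschitz $F$ boundedness is moreover automatic, since $\lg$ has finite $d_\infty$-diameter (as $G$ is compact), and one may as well regard $F$ as bounded, just as in Lemma \ref{t2.2}.

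First I would invoke the Rademacher theorem \cite[Theorem 1.5]{Sh}, which is exactly the loop-group counterpart of the ingredient used for $\pc(G)$: since the heat kernel measure $\nu$ is quasi-invariant under the right action of the Cameron--Martin group $\mathrm{exp}(H_0(\G))$ and satisfies the integration-by-parts formula underlying the closable form $\mathscr E$, every $d_L$-Lipschitz $F$ is $\nu$-almost everywhere differentiable along $H_0(\G)$. That is, there is a measurable map $\nabla^\lc F:\lg\ra H_0(\G)$ with $|\nabla^\lc F(\ell)|_{H_0}\le C$ for $\nu$-a.e.\ $\ell$, and for every $h\in H_0(\G)$ the derivative $(D_hF)(\ell)=\frac{\d}{\d\veps}\big|_{\veps=0}F(\ell e^{\veps h})$ exists for $\nu$-a.e.\ $\ell$ and equals $\la\nabla^\lc F(\ell),h\raa_{H_0}$.

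Next I would promote this a.e.\ directional differentiability, together with the $L^\infty$ (hence $L^2$) bound on $\nabla^\lc F$, to membership of $F$ in $\mathbf D_1^2(\nu)$, the domain of the closed form $\mathscr E$. Choose a refining sequence of finite partitions of $[0,1]$, let $\mathcal F_n$ be the $\sigma$-field generated by the evaluations $\ell\mapsto\ell(\theta)$ at the partition points, and set $F_n=\EE_\nu[F\mid\mathcal F_n]$. Then $F_n$ is cylindrical, $F_n\to F$ in $L^2(\nu)$ by the martingale convergence theorem, and the key point is that $F_n$ inherits from the $d_L$-Lipschitz property of $F$ (via the quasi-invariance cocycle of $\nu$) the uniform bound $|\nabla^\lc F_n|_{H_0}\le C$ $\nu$-a.e. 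Since $L^2(\nu;H_0(\G))$ is a Hilbert space, a subsequence of $\{\nabla^\lc F_n\}$ converges weakly to some $W\in L^2(\nu;H_0(\G))$; by closedness of $\mathscr E$ (equivalently, Mazur's lemma applied to convex combinations of the $\nabla^\lc F_n$ together with the lower semicontinuity of $\mathscr E$) one concludes $F\in\mathbf D_1^2(\nu)$ with $\nabla^\lc F=W$.

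The main obstacle is the uniform gradient estimate $|\nabla^\lc F_n|_{H_0}\le C$ for the cylindrical approximations, i.e.\ the fact that conditioning a $d_L$-Lipschitz function on finitely many loop evaluations preserves the Lipschitz-type bound. This is precisely where the fine structure of $\nu$ on $\lg$ enters — the Cameron--Martin cocycle and the comparison between $d_L$ and the $H_0(\G)$-norm — and it is the content of the discussion in \cite{Sh} on which \cite[Theorem 1.5]{Sh} rests. Granting that, the remainder is soft functional analysis: weak compactness in the Hilbert space $L^2(\nu;H_0(\G))$ and closedness of the Dirichlet form $\mathscr E$.
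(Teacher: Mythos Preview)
Your proposal is correct and follows the same route as the paper: both simply invoke the Rademacher theorem \cite[Theorem~1.5]{Sh} on $\lg$ (after noting via \eqref{4.4} that $d_{L^2}$-Lipschitz implies $d_L$-Lipschitz). The paper's entire argument is that one-line citation, whereas you additionally sketch the internal mechanism of that theorem (cylindrical martingale approximation, uniform gradient bounds, closedness of $\mathscr E$); this is accurate extra detail but not required here, since the cited result already delivers $F\in\mathbf D_1^2(\nu)$ directly.
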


   %In [Sh], the following transportation cost inequalities have
%   been established.
%   \begin{equation}W_{2,d_L}^2(f\nu,\nu)\leq C\int_{\lg} f\log f\,\d\nu,\quad
%   \forall\, f\geq 0,\, \int_{\lg} f\,\d\nu=1,
%   \end{equation}
%   where $C$ is a positive constant and
%   $$ W_{2,d_L}^2(f\nu,\nu)=\inf\Big\{\int_{\lg\times\lg}
%      d_L(\ell_1,\ell_2)^2\,\pi(\d\ell_1,\d\ell_2);\ \pi \in
%   \C(f\nu,\nu)\Big\},$$ and $\C(f\nu,\nu)$ is the set of all
%   probability measures on $\lg\times \lg$ with marginals $f\nu$ and
%   $\nu$ respectively. Using [], transportation  cost inequalities
%   implies the exponential integration, i.e. $\exists \,\delta>0$,
%   \begin{equation}\label{4.5}\int_{\lg}e^{\delta
%   d_L^2(\ell,\ell_0)}\,\d\nu(\ell)<+\infty,\quad \text{for some
%   fixed $\ell_0\in\lg$.}\end{equation}
   After these preparation, we are in a position to state our results.
   The results in Theorem \ref{t1.2} are parts of the results in the following two theorems.
   \begin{thm}\label{t4.2}
   For every probability measures $\sigma_1$ and $\sigma_2$ on
   $\lg$. Assume $\sigma_1$ is absolutely continuous with respect to
   the heat kernel measure $\nu$ on $\lg$. Then for each $p>1$,
   there exists a unique measurable map $\mathscr T_p:\lg\ra \lg$ such that
   it pushes $\sigma_1$ forward to $\sigma_2$ and
   $$W_{p,d_{L^2}}(\sigma_1,\sigma_2)^p=\int_{\lg}\dd{\ell}{\mathscr
   T_p(\ell)}^p\,\sigma_1(\d\ell),$$
   where
   $$W_{p,d_{L^2}}(\sigma_1,\sigma_2)^p:=
   \inf\Big\{\int_{\lg\times\lg}\dd{\ell_1}{\ell_2}^p\,\pi(\d\ell_1,\d\ell_2)\Big\},$$
   where the infimum runs over the set of all probability measures on $\lg\times \lg$ with marginals $\sigma_1$ and
   $\sigma_2$ respectively.
   \end{thm}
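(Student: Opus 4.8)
The plan is to follow the proof of Theorem~\ref{t1.1} almost verbatim, with the Wiener space $(\pg,\mu)$ replaced by $(\lg,\nu)$, the Cameron--Martin space $H(\G)$ by $H_0(\G)$, and $\nabla$ by $\nabla^\lc$. Since $G$ is compact, its diameter $D$ is finite, so the cost $c(\ell_1,\ell_2)=\dd{\ell_1}{\ell_2}^p$ is bounded; in particular the hypotheses of Theorem~\ref{t2.1} hold with constant functions $a,b,c_X,c_Y$, and $c$ is continuous on $\lg\times\lg$. Theorem~\ref{t2.1} then yields a Kantorovich pair $(\phi,\psi)$ with $\phi=\psi^c$ and
\[
W_{p,d_{L^2}}(\sigma_1,\sigma_2)^p=\int_{\lg}\phi\,\d\sigma_1+\int_{\lg}\psi\,\d\sigma_2 .
\]
Just as in Section~3, boundedness of $d_{L^2}$ makes $\phi=\psi^c$ a $d_{L^2}$-Lipschitz (hence, by \eqref{4.4}, $d_L$-Lipschitz) function, so Lemma~\ref{t4.1} gives $\phi\in\mathbf D_1^2(\nu)$; since $\sigma_1\ll\nu$, $\phi$ is $\sigma_1$-a.e.\ differentiable. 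As $\lg$ is Polish, $\C(\sigma_1,\sigma_2)$ is tight and continuity of the cost produces an optimal plan $\pi$; this $\pi$ is concentrated on the set $\{\phi(\ell_1)+\psi(\ell_2)=\dd{\ell_1}{\ell_2}^p\}$, and, discarding a $\sigma_1$-null set, also on the set where $\phi$ is differentiable at $\ell_1$.

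The core step is the loop analogue of Proposition~\ref{t2.3}: if $\phi(\ell_1)+\psi(\ell_2)=\dd{\ell_1}{\ell_2}^p$ and $\phi$ is differentiable at $\ell_1$, then $\ell_2$ is uniquely determined by $\ell_1$ and $\phi$. For each $\theta$, pick a constant-speed minimizing geodesic $v_\theta:[0,1]\ra G$ with $v_\theta(0)=\ell_2(\theta)^{-1}\ell_1(\theta)$, $v_\theta(1)=e$ and $L(v_\theta)^2=\rho(\ell_1(\theta),\ell_2(\theta))^2$, and set $V_\theta(u)=\int_0^u v_\theta(s)^{-1}\dot v_\theta(s)\,\d s$. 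For $h\in H_0(\G)$ the perturbed loop $\ell_1 e^{\veps h}$ again lies in $\lg$ because $h(0)=h(1)=0$, and the competitor $\tilde v_\theta(s)=v_\theta(s)e^{(1-s)\veps h(\theta)}$ joins $\ell_2(\theta)^{-1}\ell_1(\theta)e^{\veps h(\theta)}$ to $e$; expanding $L(\tilde v_\theta)^2$ in $\veps$ exactly as in \eqref{2.9}--\eqref{2.13}, using $\phi=\psi^c$, and letting $\veps\ra 0^+$ and $\veps\ra 0^-$ gives
\[
\la \nabla^\lc\phi(\ell_1),h\raa_{H_0}=-p\,\dd{\ell_1}{\ell_2}^{p-2}\int_0^1\la V_\theta(1),h(\theta)\raa_\G\,\d\theta .
\]
Testing against localizing sequences $h_\veps e_i$ supported in the interior (as in \eqref{2.14}) shows that $\dd{\ell_1}{\ell_2}^{p-2}V_\theta(1)$ is, for a.e.\ $\theta$, a measurable function of $\ell_1$ determined by $\nabla^\lc\phi(\ell_1)$. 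If $p=2$ this pins down $V_\theta(1)$ directly. If $p\neq2$, I would run the refinement of Section~4: the inner variation $v_\theta(s)e^{\veps c(s)a}$ with $c\in C^2([0,1],\R)$, $c(0)=c(1)=0$, yields $\la V_\theta(1),c'(1)a\raa_\G=\int_0^1\la V_\theta(s),c''(s)a\raa_\G\,\d s$; comparing a second candidate $\tilde\ell_2$ with data $\tilde v_\theta,\tilde V_\theta$, the denseness of $\{s\mapsto c''(s)a\}$ in $L^2$ and the continuity of $s\mapsto V_\theta(s)$ force $\dd{\ell_1}{\ell_2}^{p-2}V_\theta(s)=\dd{\ell_1}{\tilde\ell_2}^{p-2}\tilde V_\theta(s)$ for all $s$ and a.e.\ $\theta$; differentiating in $s$, then integrating over $s$ and over the square in $\theta$ gives $\dd{\ell_1}{\ell_2}=\dd{\ell_1}{\tilde\ell_2}$, hence $V_\theta(1)=\tilde V_\theta(1)$ for a.e.\ $\theta$. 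In either case Lemma~\ref{t2.4} recovers $v_\theta$ from $V_\theta(1)$, so $\ell_2(\theta)=\ell_1(\theta)v_\theta(0)^{-1}$ is determined for a.e.\ $\theta$, and by continuity of $\ell_2$ for every $\theta$; this defines $\mathscr T_p(\ell_1):=\ell_2$.

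Two points remain, handled exactly as for path groups. First, to use the above identities pointwise we need $\theta\mapsto V_\theta(1)$ to be continuous off an at most countable set $\Omega$: since $\ell_1,\ell_2$ are continuous and $\cut(e)$ is closed, the times at which $\ell_2^{-1}\ell_1$ enters or leaves $\cut(e)$ form such a set $\Omega$, while at the endpoints $\ell_2(0)^{-1}\ell_1(0)=\ell_2(1)^{-1}\ell_1(1)=e\notin\cut(e)$ causes no trouble; on subintervals avoiding $\cut(e)$ there is a unique continuous choice of geodesics, and on subintervals inside $\cut(e)$ hypothesis (H) provides a piecewise-continuous $\G$-valued lift of $\ell_2^{-1}\ell_1$ under $\exp_e$, hence a continuous family $v_\theta$. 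Second, measurability of $\mathscr T_p$: from $\nabla^\lc\phi(\ell_1)$ one reconstructs $V_\theta(s)$ by the analogue of \eqref{2.17} (now with the Green function $G(\theta_i,\theta)=\theta_i\wedge\theta-\theta_i\theta$ of $\nabla^\lc$ in place of the path-group kernel), then solves $v_\theta^{-1}\dot v_\theta=\dot V_\theta$, $v_\theta(1)=e$, obtaining $v_\theta$ measurably in $\ell_1$ for $\theta\notin\Omega$; approximating $\ell_2$ by the piecewise-geodesic interpolants through the finitely many values $\ell_2(i/N)$, $i=0,\dots,N$, and letting $N\ra\infty$ yields measurability of $\ell_1\mapsto\mathscr T_p(\ell_1)$, and then $\pi=(\mathrm{id}\times\mathscr T_p)_\ast\sigma_1$, $(\mathscr T_p)_\ast\sigma_1=\sigma_2$, and the asserted integral identity follow. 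Uniqueness of $\mathscr T_p$ among all maps attaining the infimum holds because any optimal plan concentrates on the graph of such a map, so two of them must agree $\sigma_1$-a.e. I expect the genuinely delicate step, as in Section~4, to be the $p\neq2$ passage from ``$\dd{\ell_1}{\ell_2}^{p-2}V_\theta(1)$ determined'' to ``$V_\theta(1)$ determined'', which forces the comparison of the rescaling factors through the chain \eqref{2.19}--\eqref{2.25}; the only loop-specific care there is that every test perturbation lies in $H_0(\G)$, which is harmless since all localizations are taken inside the open interval $(0,1)$.
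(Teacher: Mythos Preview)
Your proposal is correct and follows essentially the same route as the paper's proof, which is itself only sketched: apply Theorem~\ref{t2.1} to obtain the Kantorovich pair, use Lemma~\ref{t4.1} to put $\phi$ in $\mathbf D_1^2(\nu)$, and then rerun the variational argument of Proposition~\ref{t2.3} (for $p=2$) and of Section~4 (for $p\neq 2$) with $H(\G)$ replaced by $H_0(\G)$. Your write-up is in fact more detailed than the paper's own proof, which merely points to these substitutions and to the measurability construction from the proof of Theorem~\ref{t1.1}.
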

   \begin{proof}(\textbf{Sketched})\quad The proof of this theorem
   gets along the same lines as the proof of Theorem \ref{t1.1}.
   First, Theorem \ref{t2.1} guarantees the existence of the Kantorovich potential $\phi$
   and $\psi$ such that
   $$\phi(\ell)=\psi^c(\ell):=\inf_{\ell'\in \lg}\big\{\dd{\ell}{\ell'}^p-\psi(\ell')\big\}.$$
   Then $\phi$ is $d_{L^2}$-Lipschitz continuous.
   By the Rademacher's theorem, Lemma \ref{t4.1}, $\phi$ belongs to $\mathbf{D}_1^2(\nu)$.
   Then using the variational method to show the uniqueness of
   $\ell_2\in \lg$ such that
   $$\phi(\ell_1)+\psi(\ell_2)=\dd{\ell_1}{\ell_2}^p,$$ if $\phi$ is differentiable at $\ell_1$.
   This progress is completely similar to the proof of Proposition \ref{t2.3} for case $p=2$
   and the discussion in section 3 for case $p>1$.
   In this step, the different point is just to replace $h\in H(\G)$ with $h\in
   H_0(\G)$. Then the desired map is the map defined by $\mathscr
   T_p(\ell_1)=\ell_2$ such that above equation holds. The
   measurability of this map comes from the construction as in the
   argument of Theorem \ref{t1.1}.
   \end{proof}

   \begin{thm}\label{t4.3}
   On $\lg$, for each $p>1$, there exists a unique measurable map $\mathscr
   T_p:\lg\ra \lg$ such that $\mathscr T_p$ pushes heat kernel
   measure $\nu$ forward to pinned Wiener measure $\mu_0$ such that
   \begin{equation}\label{4.5}
   W_{p,d_{L^2}}(\nu,\mu_0)^p=\int_{\lg}\dd{\ell}{\mathscr
   T_p(\ell)}^p\,\d\nu(\ell).
   \end{equation} Moreover, $\mathscr T_p$ is $\nu$-a.e. reversible, and its inverse $\mathscr T_p^{-1}$
   pushes $\mu_0$ forward to $\nu$.
   \end{thm}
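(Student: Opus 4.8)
The plan is to deduce Theorem \ref{t4.3} from Theorem \ref{t4.2} applied twice, once in each direction, together with the mutual absolute continuity of the heat kernel measure $\nu$ and the pinned Wiener measure $\mu_0$ recalled above: $\nu\ll\mu_0$ by \cite{DS} and $\mu_0\ll\nu$ by \cite{AD}. The only genuinely new point is the reversibility of $\mathscr T_p$, and for that I will use the symmetry of the cost $\dd{\ell_1}{\ell_2}^p$ together with the fact that in each direction the optimal transport \emph{plan}, and not merely the optimal map, is unique.

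First I would apply Theorem \ref{t4.2} with $\sigma_1=\nu$ and $\sigma_2=\mu_0$; since $\nu$ is trivially absolutely continuous with respect to itself, this produces, for each $p>1$, a unique measurable $\mathscr T_p:\lg\ra\lg$ with $(\mathscr T_p)_\ast\nu=\mu_0$ satisfying \eqref{4.5}. Inspecting the proof of Theorem \ref{t4.2} (which follows that of Theorem \ref{t1.1}), one gets more: any optimal plan $\pi\in\C(\nu,\mu_0)$ is concentrated on the set $\{\phi(\ell_1)+\psi(\ell_2)=\dd{\ell_1}{\ell_2}^p\}$ and hence, by the argument behind Proposition \ref{t2.3} and its $p>1$ analogue, on the graph of $\mathscr T_p$, so $\pi=(\mathrm{id}\times\mathscr T_p)_\ast\nu$ is the \emph{unique} optimal plan between $\nu$ and $\mu_0$. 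Next, since $\mu_0\ll\nu$ by \cite{AD}, I would apply Theorem \ref{t4.2} again with $\sigma_1=\mu_0$ and $\sigma_2=\nu$, obtaining a unique measurable $\mathscr S_p:\lg\ra\lg$ with $(\mathscr S_p)_\ast\mu_0=\nu$ realizing $W_{p,d_{L^2}}(\mu_0,\nu)$, and again $(\mathrm{id}\times\mathscr S_p)_\ast\mu_0$ is the unique optimal plan in $\C(\mu_0,\nu)$.

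Finally I would exploit symmetry. The transpose $\pi^\top$ of $\pi=(\mathrm{id}\times\mathscr T_p)_\ast\nu$ under $(\ell_1,\ell_2)\mapsto(\ell_2,\ell_1)$ lies in $\C(\mu_0,\nu)$ and has the same total cost, hence is optimal there; by the uniqueness just established, $\pi^\top=(\mathrm{id}\times\mathscr S_p)_\ast\mu_0$, i.e. $\pi=(\mathscr S_p\times\mathrm{id})_\ast\mu_0$. Comparing with the first representation, $\pi$ is concentrated simultaneously on $\{(\ell,\mathscr T_p(\ell))\}$ and on $\{(\mathscr S_p(\ell'),\ell')\}$, so $\ell=\mathscr S_p(\mathscr T_p(\ell))$ for $\nu$-a.e.\ $\ell$ and $\ell'=\mathscr T_p(\mathscr S_p(\ell'))$ for $\mu_0$-a.e.\ $\ell'$. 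Thus $\mathscr S_p=\mathscr T_p^{-1}$ ($\nu$-a.e.\ and $\mu_0$-a.e.), $\mathscr T_p$ is $\nu$-a.e.\ reversible, and $\mathscr T_p^{-1}$ pushes $\mu_0$ forward to $\nu$, which is the claim.

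The main obstacle is not the construction in the first step but justifying that the optimal \emph{plan} is unique and carried by a map in \emph{both} directions, since Theorem \ref{t4.2} is stated only for a source that is absolutely continuous with respect to the heat kernel measure. This is precisely where $\mu_0\ll\nu$ enters: it lets us put $\mu_0$ in the role of the source and invoke the differentiability $\nu$-a.e.\ (resp.\ $\mu_0$-a.e.) of the Kantorovich potential that underlies Proposition \ref{t2.3}. Once that symmetry of hypotheses is in hand, the transpose-plan argument is routine, and no new analysis on $\lg$ is needed beyond what Theorem \ref{t4.2} already provides.
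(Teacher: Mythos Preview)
Your proof is correct and follows the same overall strategy as the paper: invoke the mutual absolute continuity of $\nu$ and $\mu_0$ (via \cite{DS} and \cite{AD}) and apply Theorem \ref{t4.2} in both directions to produce the optimal maps $\mathscr T_p$ and $\mathscr S_p$. The only difference is in how you establish reversibility. The paper simply writes
\[
\int_{\lg}F\,\d\nu=\int_{\lg}F(\mathscr S_p)\,\d\mu_0=\int_{\lg}F(\mathscr S_p\circ\mathscr T_p)\,\d\nu
\]
and immediately concludes $\mathscr S_p\circ\mathscr T_p=\mathrm{id}$ $\nu$-a.e.; taken literally this only says $\mathscr S_p\circ\mathscr T_p$ preserves $\nu$, which by itself does not force it to be the identity. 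Your route---observing that the proof of Theorem \ref{t4.2} actually yields uniqueness of the optimal \emph{plan}, then transposing and matching graphs to get $\mathscr S_p\circ\mathscr T_p=\mathrm{id}$ $\nu$-a.e.\ and $\mathscr T_p\circ\mathscr S_p=\mathrm{id}$ $\mu_0$-a.e.---is the standard rigorous way to justify that step, and it is what the paper's terse argument presumably intends.
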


   \begin{proof}
   Noting that $\mu_0$ and $\nu$ is mutually absolutely continuous
   with respect to each other, applying Theorem \ref{t4.2} yields that there exists a
   measurable map $\mathscr T_p:\lg\ra \lg$ which pushes $\nu$ forward to $\mu_0$
   and a measurable map $\mathscr S_p:\lg\ra \lg$ which pushes $\mu_0$ forward to $\nu$. Furthermore,
   \begin{equation}\label{4.6}
   W_{p,d_{L^2}}(\nu,\mu_0)^p=\int_{\lg}\dd{\ell}{\mathscr
   T_p(\ell)}\,\d\nu(\ell)=\int_{\lg}\dd{\mathscr
   S_p(\ell)}{\ell}^p\,\d\mu_0(\ell).
   \end{equation}
   For any measurable function $F$ on $\lg$, we have
   \begin{equation}\label{4.6}
   \int_{\lg}F(\ell)\,\d\nu(\ell)=\int_{\lg}F(\mathscr
   S_p(\ell))\,\d\mu_0(\ell)=\int_{\lg}F(\mathscr S_p(\mathscr
   T_p(\ell)))\,\d\nu(\ell).
   \end{equation} Therefore,
   $$\mathscr S_p\circ \mathscr T_p=id,\quad \nu\text{-}a.e.,$$
   where $id$ denotes the identity map. We conclude the argument immediately.
   \end{proof}

\noindent \textbf{Acknowledgement} This work is supported by NSFC (No. 11431014, 11771327). The author is grateful to Professor S.Z. Fang for the invitation of visiting to University of Bourgogne in July 2017 and for valuable discussion on this topic.

\end{document}